\newtheorem{theorem}{Theorem}
\newtheorem{corollary}[theorem]{Corollary}
\newtheorem{lemma}[theorem]{Lemma}
{\theoremstyle{remark}
\newtheorem{remark}{Remark}
\newtheorem{definition}{Definition}
\newtheorem{example}{Example}[section]}
\newcommand{\RR}{\mathbb{R}}
\newcommand{\NN}{\mathbb{N}}
\newcommand{\ZZ}{\mathbb{Z}}
\DeclareMathOperator*{\osc}{osc}
\newcommand{\tx}[1]{{\text{\rm #1}}}
\title[Lipschitz bounds for quasilinear parabolic equations]{Lipschitz bounds for solutions of quasilinear parabolic equations in one space variable}
\author{Ben Andrews}
\thanks{Research partially supported by a Discovery Grant of the Australian Research Council}
\address{Centre for Mathematics and its Applications,  Australian National University, A.C.T. 0200, Australia}
\email{Ben.Andrews@maths.anu.edu.au}
\subjclass[2000]{35K55, 35B65}
\author{Julie Clutterbuck}
\address{Centre for Mathematics and its Applications,  Australian National University, A.C.T. 0200, Australia}
\email{Julie.Clutterbuck@maths.anu.edu.au}
\begin{document}

\begin{abstract}
We bound the modulus of continuity of solutions to quasilinear parabolic equations in one space variable in terms of the initial modulus of continuity and elapsed time.  In particular we characterize those equations for which the Lipschitz constants of solutions can be bounded in terms of their initial oscillation and elapsed time.
\end{abstract}

\maketitle

\section{Introduction}\label{sec:intro}

In this paper we investigate the extent to which degenerate nonlinear parabolic
equations smooth out irregularities in the initial data.  This is a well-known phenomenon for the
classical heat equation:  Initial data which are very singular (such as in Sobolev spaces of arbitrary
negative exponent) give rise to solutions which are $C^{\infty}$ for any positive time.   This ceases to be true for more nonlinear equations, particularly in cases where the equation becomes degenerate when the gradient becomes large.  Our aim is to delineate clearly when such flows give rise to classical solutions from initial data which are merely continuous, and conversely to characterise the modulus of continuity required on the initial data to guarantee a classical solution for positive times.  

In this first paper we give a thorough treatment of equations with one spatial variable, where the situation can be completely understood.  A subsequent paper will extend these methods to higher dimensions, and show that for many equations of interest the behaviour is determined by a suitable one-dimensional problem.  Thus the results of this paper, while of some interest in their own right, also serve as a foundation for our work on equations with several spatial variables.

The argument employed in this paper is based on a method used by Kruzhkov
\cite{kruzhkov:nonlinear}:
If $u$ is a solution of a parabolic equation in one variable, so that
$$
u_t = au_{xx} + bu_x +cu + f
$$ 
where $a$, $b$ and $c$ and $f$ are bounded, $a$ is strictly positive, and $u$ is 
bounded (say $|u|\leq M$), then $w(x,y,t)=u(y,t)-u(x,t)$
satisfies a parabolic equation in two spatial variables:
\begin{align*}
w_t(x,y,t) &= a(y,t)u_{yy}(y,t)+b(y,t)u_y(y,t)+c(y,t)u(y,t)+f(y,t)\cr
&\phantom{==}-a(x,t)u_{xx}(x,t)-b(x,t)u_x(x,t)-c(x,t)u(x,t)-f(x,t)\cr
&=a(y,t)w_{yy}+a(x,t)w_{xx}+b(y,t)w_y+b(x,t)w_x+F(x,y,t)\cr
\end{align*}
where $|F(x,y,t)|=|c(y,t)u(y,t)-c(x,t)u(x,t)+f(y,t)-f(x,t)|\leq 2M\sup|c|+2\sup|f|$.  

The important
simplification that is achieved by this is the following:  If $u$ is
defined on an interval of the real line, then $w$ can be
defined on an open set in the half-plane $\{y\geq x\}$ in ${\mathbb R}^2$,
and we have
$w=0$ on the boundary
$\{y=x\}$. A boundary gradient estimate for $w$ along
this line implies a global gradient estimate for $u$.  But
boundary gradient estimates can be proved using the parabolic maximum
principle simply by constructing suitable supersolutions near
any boundary point.   In the present example a barrier can be constructed 
at a boundary point $(z,z)$ by taking the form
$$
\psi(x,y,t) = \min\left\{A(x+y-z)^{2}+
Be^{-C(y-x)}\text{\rm erf}\left\{\frac{D(y-x)}{\sqrt{t}}\right\},M\right\}
$$
and choosing $A$ large (compared to $M/d^{2}$, where $d$ is the distance from $z$ to the 
boundary of the interval), $B$ large compared to $M$, $D$ large (compared to $1/a$), 
$C$ large (compared to $|b|/a$), $D$ large (compared to $1/a$), 
and the time interval small.

This yields interior gradient estimates for solutions of any such
equation, of the form
$$
|u'(x,t)|^2\leq CM^2\left(t^{-1}+d^{-2}\right)
$$
where $d$ is the distance from $x$ to the boundary of the domain.

In this paper we will sharpen the above argument to give optimal estimates, particularly in the case where the coefficients depend only on the gradient.  More generally, we will provide the best possible control on the modulus of continuity of solutions for positive times, in terms of the initial modulus of continuity and the elapsed time:   In Section \ref{sec:estimate} we prove the basic result, which bounds the modulus of continuity of any solution in terms of any supersolution of the same equation with the initial modulus of continuity as initial data.   The best such estimate is then given by the infimum over all supersolutions, which may be considered a viscosity solution of the equation.  In Section \ref{sec:optimal} we prove that this estimate is sharp:   Among all solutions of the equation with initial data satisfying a given bound on the modulus of continuity,  the supremum of the moduli of continuity is precisely the bound obtained in the previous section.  In Section \ref{sec:suff} we give concrete estimates on the modulus of continuity by considering supersolutions constructed from translating solutions of the flow (the strucure of which we first develop in Section
\ref{sec:trans}).  The result of this is a necessary and sufficient condition on the coefficients of the equation for the existence of a Lipschitz bound for positive times in terms of oscillation bounds at the initial time.  More generally,  we provide a sufficient condition in terms of the coefficients of the equation for a given modulus of continuity to imply Lipschitz bounds for positive times.  We place in an appendix some results for uniformly parabolic equations with coefficients depending only on the gradient.

\section{Estimate on the modulus of continuity}\label{sec:estimate}

In this section we prove the basic estimate, which controls the spatial modulus of continuity of any solution in terms of any supersolution with initial data determined by the initial modulus of continuity.
We illustrate this first for the graphical curve-shortening flow, and then treat more general one-dimensional equations.  While a direct generalisation of Kruzhkov's argument does provide a bound, our sharp estimate requires the use of the full Hessian matrix of the function of two spatial variables.

Let $u: {\mathbb R}\times[0,T)\to{\mathbb R}$ be a smooth solution of the
curve-shortening flow
\begin{equation}\label{eq:csf}
\frac{\partial u}{\partial t}= 
\frac{u''}{1+{u'}^2}.
\end{equation}
Suppose $\osc u=\sup u-\inf u\leq M$ and $u(x+L,t)=u(x,t)$ for every 
$x\in{\RR}$ and $t\geq 0$.  

\begin{theorem} \label{Theorem 1}
For all $x\neq y$ in ${\RR}$ and $t>0$,
$$
|u(y,t)-u(x,t)|\leq 2M\varphi\left(\frac{|y-x|}{2M},\frac{t}{M^2}\right)
$$
where $\varphi: [0,\infty)\times (0,\infty)\to{\RR}$
is the solution of \eqref{eq:csf}  satisfying
$\varphi(x,t)\to \frac12$ as $t\to 0$ for $x>0$, $\varphi(0,t)=0$ for $t>0$, 
and $\varphi(x,t)\to \frac12$ as $x\to\infty$ for any $t>0$.
In particular, 
$$
|Du(x,t)|\leq C_1\left(1+ t^{3/2}\exp(C_2/t)\right)
$$
for some constants $C_1$ and $C_2$ depending only on $M$.
\end{theorem}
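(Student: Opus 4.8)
The plan is to prove the pointwise bound first and then recover the gradient estimate by differentiating it along the diagonal $y=x$. First I would reduce the statement to a comparison in two space variables. Put $w(x,y,t)=u(y,t)-u(x,t)$ on $\{y\ge x\}$; by $L$-periodicity it suffices to work on a compact fundamental domain. Using $w_x=-u_x(x,t)$, $w_y=u_x(y,t)$, $w_{xx}=-u_{xx}(x,t)$, $w_{yy}=u_{xx}(y,t)$, $w_{xy}=0$, one gets
\[
w_t=\frac{w_{yy}}{1+w_y^2}+\frac{w_{xx}}{1+w_x^2}.
\]
Set $\psi(x,y,t)=2M\varphi\!\bigl(\tfrac{y-x}{2M},\tfrac t{M^2}\bigr)$; writing $\xi,\tau$ for the arguments of $\varphi$ one has $\psi_x=-\varphi_\xi=-\psi_y$, $\psi_{xx}=\psi_{yy}=-\psi_{xy}=\tfrac1{2M}\varphi_{\xi\xi}$ and $\psi_t=\tfrac2M\varphi_\tau$. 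The boundary data match: $w=\psi=0$ on $\{y=x\}$ (since $\varphi(0,\tau)=0$); $w(x,y,0)\le\osc u\le M=\psi(x,y,0^+)$ for $y>x$ (since $\varphi(\cdot,0^+)=\tfrac12$); and $w=0\le\psi$ on $\{y=x+L\}$. Since $-w$ solves the same equation, it is enough to prove $w\le\psi$, after which $|u(y,t)-u(x,t)|\le2M\varphi(\tfrac{|y-x|}{2M},\tfrac t{M^2})$ follows.

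The comparison $w\le\psi$ is where the full Hessian is needed. Replace $\psi$ by $\psi+\varepsilon(1+t)$ and suppose $w-\psi-\varepsilon(1+t)$ first vanishes at an interior point $(x_0,y_0,t_0)$. The first-order condition forces $u_x(x_0,t_0)=u_x(y_0,t_0)=\varphi_\xi=:g$, so $w_t=\tfrac{s-r}{1+g^2}$ with $r:=u_{xx}(x_0,t_0)$, $s:=u_{xx}(y_0,t_0)$; and since $\varphi$ solves \eqref{eq:csf}, $\psi_t=\tfrac{4a}{1+g^2}$ with $a:=\tfrac1{2M}\varphi_{\xi\xi}$. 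Hence $w_t\ge\psi_t+\varepsilon$ would give $s-r>4a$. But $D^2_{(x,y)}(w-\psi)\le0$ together with $w_{xy}=0$ yields $s\le a$, $-r\le a$, and the determinant inequality $(-r-a)(s-a)\ge a^2$, i.e.\ $rs\le a(r-s)$. If $a\ge0$ then $s-r\le2a\le4a$; if $a<0$, write $r=-a+p$, $s=a-q$ with $p,q\ge0$, so the determinant condition becomes $pq\ge a^2$, whence $p+q\ge2|a|$ by the arithmetic–geometric mean inequality and $s-r=2a-(p+q)\le4a$. Either way $s-r\le4a$, a contradiction; letting $\varepsilon\to0$ gives $w\le\psi$. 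Note that Kruzhkov's original argument retains only $s\le a$ and $-r\le a$ — the diagonal entries $w_{xx},w_{yy}$ — which forces the comparison profile to be convex and is not sharp; the determinant inequality, available here because $w_{xy}=0$, is precisely what admits the concave profile $\varphi$.

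Existence of a $\varphi$ with the stated limits, smoothness for $\tau>0$, and $0<\varphi<\tfrac12$ is a short separate lemma — for instance $\varphi$ is the increasing limit of solutions of \eqref{eq:csf} on $(0,\infty)$ with Dirichlet value $0$ at $\xi=0$ and mollified-step initial data tending to $\tfrac12\chi_{\{\xi>0\}}$, using monotone dependence on the data and interior parabolic estimates. Dividing the pointwise bound by $|y-x|$ and letting $y\to x$ (and using $\varphi(0,\tau)=0$) gives $|Du(x,t)|\le\varphi_\xi\!\bigl(0,\tfrac t{M^2}\bigr)$, so the whole statement reduces to showing $\varphi_\xi(0,\tau)\le A\bigl(1+\tau^{3/2}e^{B/\tau}\bigr)$ for absolute constants $A,B$; substituting $\tau=t/M^2$ then produces the claimed bound with $C_1,C_2$ depending only on $M$.

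This last estimate is the main obstacle. I would obtain it by one-dimensional comparison with explicit barriers: upper barriers built by rescaling and truncating (at height $\tfrac12$) the travelling-wave ("grim reaper") solution $\tau-\log\cos\xi$ of \eqref{eq:csf}, and lower control obtained by passing to the inverse function $X(\cdot,\tau)=\varphi(\cdot,\tau)^{-1}\colon(0,\tfrac12)\to(0,\infty)$, which solves the same equation with $X(\cdot,0^+)\equiv0$ and $X\to\infty$ at $p=\tfrac12$, and using $(1+X_p^2)^{-1}\le1$ to compare $X$ from below with concave subsolutions of the heat equation. Quantitatively this expresses that a jump is smoothed exponentially slowly by this degenerate flow — the width of $\{\varphi<\tfrac12-\delta\}$ is exponentially small in $1/\tau$, so $\varphi_\xi(0,\tau)$ is exponentially large in $1/\tau$, with a polynomial correction accounting for the $\tau^{3/2}$ factor. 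The difficulty is that the naive separated-variable barriers $\delta(\tau)g(\xi)$ fail, because the monotonicity of $\delta$ forced by the equation is incompatible with $\delta(0^+)=0$; the barrier must therefore be genuinely $\tau$-dependent in shape and finely tuned to reproduce the exact exponential rate.
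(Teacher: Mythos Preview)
Your comparison argument is correct and is essentially the paper's argument, but your route to the key inequality is longer than necessary.  At the contact point you have the negative semidefinite matrix
\[
D^2_{(x,y)}(w-\psi)=\begin{pmatrix}-r-a & a\\ a & s-a\end{pmatrix}\le 0.
\]
Rather than splitting on the sign of $a$ and invoking the determinant and AM--GM, simply test this against the vector $(1,-1)$: one gets
\[
(-r-a)-2a+(s-a)=s-r-4a\le 0,
\]
which is exactly the inequality you need.  This is what the paper does, phrased as a trace against the rank-one positive semidefinite matrix $\alpha(\varphi')\bigl(\begin{smallmatrix}1&-1\\-1&1\end{smallmatrix}\bigr)$ (the choice $c=-\alpha(\varphi')$).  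Your remark that ``Kruzhkov's original argument retains only the diagonal entries'' is right, and the point that the off-diagonal information is what allows the concave barrier $\varphi$ is well taken; it is just that the relevant off-diagonal information is the single directional second derivative along $(1,-1)$, not the full determinant.

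Where your proposal is genuinely weaker than the paper is the bound on $\varphi_\xi(0,\tau)$.  You outline a strategy (truncated grim reapers from above, heat-equation comparison on the inverse function from below) but do not carry it out, and the final paragraph concedes that the obvious barrier ans\"atze fail and that the barrier ``must be finely tuned''.  The paper avoids this by writing down an explicit one-parameter family: the function $\psi$ defined implicitly by
\[
\xi=t^{-1/2}\Bigl(\exp\{-\tfrac{(\psi-1)^2}{8t}\}-\exp\{-\tfrac{(\psi+1)^2}{8t}\}\Bigr)
\]
is checked to be a supersolution of \eqref{eq:csf} (for small $t$, where $|\psi|\le\tfrac12$) lying above $\varphi$, and reading off the slope at $\xi=0$ gives $\varphi'(0,t)\le 2t^{3/2}\exp(1/(8t))$ directly.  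If you want to salvage your approach you would need to produce a comparably concrete barrier; as it stands that part of the proof is a sketch of a sketch.  A minor further point: for existence of $\varphi$ the paper simply rotates the initial step by $90^\circ$ to obtain a Lipschitz graph and invokes the Ecker--Huisken interior estimates, which is quicker than your proposed monotone limit.
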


The existence of $\varphi$ follows from the results of Ecker and Huisken \cite{eh:interior}, since
the initial condition can be rotated to be a Lipschitz graph.  The strong maximum principle (applied to the rotated graph) implies that  $\varphi$ has bounded gradient for 
positive times.

\begin{proof}
By replacing $u$ by $\frac{1}{M}u(Mx,M^2t)$ we can assume $M=1$.  Let $\varepsilon>0$, and define
$$
Z(x,y,t)=u(y,t)-u(x,t)-2\varphi\left(\frac{|y-x|}{2},t\right)-\varepsilon(1+t)
$$ 
on $S=\{(x,y,t):\ x\leq y\leq x+L,\ t > 0\}$.  $Z$ is negative near the diagonal $\{y=x\}$ and near $\{y=x+L\}$ since $u$ is continuous and periodic, and near $t=0$ since $\varphi$ approaches $1/2$ locally uniformly away from the diagonal, while $|u(y,t)-u(x,t)|\leq 1$.

Suppose $Z$ is not negative on $S$.  Then there exists a first time $t_0>0$ and a point $(x_0,y_0)$ with $x_0<y_0<x_0+L$ such that $Z(x_0,y_0,t_0)=0$.
At this point we have 
\begin{equation}\label{eq:xderivcond}
0=\frac{\partial Z}{\partial x} = -u'(x_0,t_0)+\varphi'\left(\frac{y_0-x_0}{2},t_0\right)
\end{equation}
where $\varphi'$ denotes the derivative of $\phi$ in the first argument,
and
\begin{equation}\label{eq:yderivcond}
0=\frac{\partial Z}{\partial y} = u'(y_0,t_0)-\varphi'\left(\frac{y_0-x_0}{2},t_0\right),
\end{equation}
and the Hessian matrix is negative semi-definite
\begin{equation}\label{eq:Hessiancond}
0\geq\bmatrix \frac{\partial^2Z}{\partial x^2}&\frac{\partial^2Z}{
\partial x\partial y}\cr
\frac{\partial^2Z}{\partial y\partial x}&\frac{\partial^2Z}{\partial y^2}\cr
\endbmatrix
\!=\!\bmatrix
-u''(x,t)-\frac12\varphi''\left(\frac{y-x}{2},t\right) & \frac12\varphi''\left(\frac{y-x}{2},t\right)\cr
 \frac12\varphi''\left(\frac{y-x}{2},t\right) & u''(y,t)-\frac12\varphi''\left(\frac{y-x}{2},t\right)\cr
\endbmatrix.
\end{equation}
Since $u$ satisfies Equation \eqref{eq:csf},
{\allowdisplaybreaks we have at the maximum of $Z$
\begin{align*}
0\leq\frac{\partial Z}{\partial t}(x,y,t)
&=\frac{\partial u}{\partial t}(y,t)-
\frac{\partial u}{\partial t}(x,t)-2\frac{\partial\varphi}{\partial t}\left(\frac{y-x}{2},t\right)-\varepsilon\cr
&=\frac{u''(y,t)}{ 1+(u'(y,t))^2}-\frac{u''(x,t)}{ 1+(u'(x,t))^2}
-2\frac{\partial\varphi}{\partial t}\left(\frac{y-x}{2},t\right)-\varepsilon\cr
&=\frac{\frac{\partial^2Z}{\partial y^2}+\frac12\varphi''}{ 1+(\varphi')^2}
+\frac{\frac{\partial^2Z}{\partial x^2}+\frac12\varphi''}{ 1+(\varphi')^2}+2c\left(\!\!\frac{\partial^2Z}{\partial x\partial
y}-\frac12\varphi''\right)
-2\frac{\partial\varphi}{\partial t}-\varepsilon\cr 
&=\text{\rm Tr}
\left(
\bmatrix \frac{1}{ 1+(\varphi')^2}&c\cr c&\frac{1}{ 1+(\varphi')^2}\cr\endbmatrix
\bmatrix \frac{\partial^2Z}{\partial x^2}&\frac{\partial^2Z}{
\partial x\partial y}\cr
\frac{\partial^2Z}{\partial y\partial x}&\frac{\partial^2Z}{\partial y^2}\cr
\endbmatrix
\right)\cr
&\quad\null+\frac12\left(
\frac{2}{ 1+(\varphi')^2}-2c\right)\varphi''-2\frac{\partial\varphi}{\partial
t}-\varepsilon\cr
&\leq \frac12\left(
\frac{2}{ 1+(\varphi')^2}-2c\right)\varphi''-2\frac{\partial\varphi}{\partial
t}-\varepsilon
\cr
\end{align*}
provided the matrix }
$$
\bmatrix \frac{1}{ 1+(\varphi')^2}&c\cr c&\frac{1}{ 1+(\varphi')^2}\cr\endbmatrix
$$
is positive semi-definite.  Choosing $c=-1/(1+(\varphi')^2)$ we arrive at a contradiction, since $\varphi$ has been chosen to satisfy Equation \eqref{eq:csf}.  This contradicts the assumption that $Z$ is not negative on $S$.
Therefore $u(y,t)-u(x,t)< 2\varphi\left(\frac{|y-x|}{2},t\right)+\varepsilon(1+t)$ for every $\varepsilon>0$, and hence $u(y,t)-u(x,t)\leq 2\varphi\left(\frac{|y-x|}{2},t\right)$ for all $x\leq y\leq x+L$ and all $t>0$.  A similar argument proves the result for $y<x<y+L$.

The explicit gradient estimate in the theorem follows by comparing $\varphi$ with the function $\psi$ defined implicitly by
$$
\xi = t^{-1/2}
\left(\exp\left\{-\frac{|\psi(\xi,t)-1|^2}{
8t}\right\}-\exp\left\{-\frac{|\psi(\xi,t)+1|^2}{ 8t}\right\}\right).
$$
This is a supersolution for $t$ sufficiently small where $|\psi|\leq\frac12$, so
$0<\varphi(\xi,t)<\min\left\{\psi(\xi,t),\frac12\right\}$ for $\xi,t>0$.
In particular this implies that 
$$
\varphi'(0,t)\leq{2t^{3/2}}\exp\left\{\frac{1}{ 8t}\right\}
$$
for $t$ small enough, and this gives the bound for $|Du|$ in the Theorem.
\end{proof}

The method of proof above applies to quite general equations of the form
\begin{equation}\label{eq:1Dflow}
\frac{\partial u}{\partial t} = \alpha(u')u''
\end{equation}
where the coefficient $\alpha$ is continuous and positive.  We call a function \emph{regular} if it has continuous spatial derivatives up to second order and continuous first time derivative (see Appendix A for regularity statements for equations of this kind).  

\begin{definition}\label{def:modcont}
Let $u\in C(\RR)$ be periodic of period $L$, and let $\psi\in C(0,L/2)$ be positive. $\psi$ is a \emph{modulus of continuity} for $u$ if for all $0<y-x<L$,
\begin{equation}\label{eq:modofcont}
-2\psi\left(\frac{L+x-y}{2}\right)\leq u(y)-u(x)\leq 2\psi\left(\frac{y-x}{2}\right).
\end{equation}
\end{definition}
\begin{definition}\label{def:supersol}
Let $\psi\in C(0,L/2)$ be positive.  Let ${\mathcal S}_\psi$ be the space of functions  $\varphi\geq 0$ which are continuous on $[0,L/2]\times[0,T]\setminus\{(0,0),(L/2,0)\}$, regular on $(0,L/2)\times(0,T]$ with
$\frac{\partial\varphi}{\partial t}\geq \alpha(\varphi')\varphi''$, and have $\varphi(z,0)\geq \psi(z)$ for all $z\in(0,L/2)$.
\end{definition}

\begin{theorem}\label{thm:1Dgeneral}
Let $u$ be a regular $L$-periodic solution of  \eqref{eq:1Dflow} on $\RR\times[0,T]$, 
 and let $\psi$ be a modulus of continuity for $u(.,0)$.  Let $\varphi\in {\mathcal S}_\psi$. Then $\varphi(.,t)$ is a modulus of continuity for $u(.,t)$ for each $t>0$.
\end{theorem}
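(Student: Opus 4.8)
The plan is to adapt the argument used for Theorem~\ref{Theorem 1} verbatim, replacing the explicit coefficient $1/(1+{u'}^2)$ with the general $\alpha$ and the explicit solution $\varphi$ with an arbitrary element of $\mathcal{S}_\psi$. First I would reduce to proving the upper inequality in \eqref{eq:modofcont}, since the lower inequality follows by applying the upper one to the function $x\mapsto u(-x,t)$ (which solves the same equation because \eqref{eq:1Dflow} is invariant under $x\mapsto -x$, as $\alpha$ depends only on $u'$ and $u''$ enters linearly, with the sign of $u'$ flipping but $\alpha(u')$ unchanged only if $\alpha$ is even---so more carefully, $-u(-x,t)$ solves it, and one tracks the oscillation bound). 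Then, for fixed $\varepsilon>0$, I would set
$$
Z(x,y,t)=u(y,t)-u(x,t)-2\varphi\!\left(\tfrac{y-x}{2},t\right)-\varepsilon(1+t)
$$
on $S=\{(x,y,t): x\le y\le x+L,\ 0<t\le T\}$, and argue that $Z<0$ on all of $S$.

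The second step is to check that $Z$ is negative near the parabolic boundary of $S$: near $\{y=x\}$ because $u$ is continuous with $u(x,x,t)-u(x,x,t)=0$ while $\varphi\ge 0$ and the $\varepsilon$-term is strictly positive; near $\{y=x+L\}$ by periodicity of $u$ together with $\varphi\ge 0$; and near $t=0$ using that $\varphi(z,0)\ge\psi(z)>0$ on $(0,L/2)$, that $\varphi$ is continuous on $[0,L/2]\times[0,T]$ away from the two corner points, and that $\psi$ is a modulus of continuity for $u(\cdot,0)$ so that $u(y,0)-u(x,0)\le 2\psi(\tfrac{y-x}{2})\le 2\varphi(\tfrac{y-x}{2},0)$, with strictness coming from the $\varepsilon$-term; the two corner points where $\varphi$ may be discontinuous are handled by noting that $u$ is bounded there while $\varphi\ge 0$. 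Some care is needed here because $\mathcal{S}_\psi$ only asks $\varphi$ continuous away from the corners, so one must confirm that $Z$ stays negative in a neighborhood of each corner---this follows since near $(0,0)$ we have $y-x\to 0$, hence $u(y,t)-u(x,t)\to 0$, while $-\varepsilon(1+t)\le-\varepsilon$.

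The third and central step is the maximum principle computation. If $Z$ is not negative somewhere on $S$, there is a first time $t_0\in(0,T]$ and an interior point $(x_0,y_0)$, $x_0<y_0<x_0+L$, with $Z=0$ there. The first-order conditions give $u'(x_0,t_0)=\varphi'(\tfrac{y_0-x_0}{2},t_0)=u'(y_0,t_0)$, so I will write $p$ for this common value; the Hessian condition \eqref{eq:Hessiancond} holds with the same matrix structure (replacing $\tfrac12\varphi''(\tfrac{y-x}{2},t)$ appropriately---note the factor $\tfrac12$ from the chain rule). Then, using $\partial_t Z\ge 0$ at $(x_0,y_0,t_0)$ and that $u$ solves \eqref{eq:1Dflow} with coefficient $\alpha(p)$ at \emph{both} points (since the gradients agree), I compute
$$
0\le \partial_t Z = \alpha(p)\,u''(y_0,t_0)-\alpha(p)\,u''(x_0,t_0)-2\,\partial_t\varphi\left(\tfrac{y_0-x_0}{2},t_0\right)-\varepsilon.
$$
Writing $u''(y_0,t_0)$ and $-u''(x_0,t_0)$ in terms of the Hessian entries of $Z$ plus multiples of $\varphi''$, this becomes $\mathrm{Tr}(B\cdot D^2Z) + (\text{terms in }\varphi) - \varepsilon$ where $B=\alpha(p)\begin{psmallmatrix}1&c\\c&1\end{psmallmatrix}$; choosing $c=-1$ makes $B$ positive semi-definite (it equals $\alpha(p)$ times the projection onto $(1,-1)$, up to scaling), so $\mathrm{Tr}(B\cdot D^2Z)\le 0$ by \eqref{eq:Hessiancond}, and the surviving terms are exactly $\alpha(p)\varphi''-2\partial_t\varphi-\varepsilon\le -\varepsilon<0$ because $\varphi$ is a supersolution, i.e.\ $\partial_t\varphi\ge\alpha(\varphi')\varphi''=\alpha(p)\varphi''$. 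This contradicts $\partial_t Z\ge0$. Hence $Z<0$ on $S$, and letting $\varepsilon\to 0$ gives $u(y,t)-u(x,t)\le 2\varphi(\tfrac{y-x}{2},t)$ for all $x\le y\le x+L$ and $t>0$.

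I expect the main obstacle to be purely bookkeeping rather than conceptual: keeping the chain-rule factors of $\tfrac12$ consistent, making the boundary-negativity argument near the two corner points of $[0,L/2]\times[0,T]$ airtight given that $\varphi$ is only required continuous away from those corners, and correctly handling the reduction to the lower bound in \eqref{eq:modofcont} (which requires checking that reflecting $u$ preserves both the equation and the relevant half of the modulus-of-continuity inequality). The key algebraic point---that the gradients of $u$ at the two contact points coincide, so a single value $p$ controls $\alpha$ at both---is what makes the whole linear-algebra step go through, exactly as in Theorem~\ref{Theorem 1}.
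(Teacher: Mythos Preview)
Your approach is essentially the paper's: define the same $Z$, check boundary negativity, and derive a contradiction at a first interior maximum via the trace trick with off-diagonal entry $-\alpha(\varphi')$. Two small corrections are worth noting.

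First, the bookkeeping slip you anticipated does occur: after inserting the cross term $2c\,Z_{xy}=2c\bigl(\tfrac12\varphi''\bigr)$ and choosing $c=-\alpha(p)$ (equivalently your $c=-1$ inside the factored matrix), the surviving $\varphi''$ coefficient is $2\alpha(p)$, not $\alpha(p)$. With the wrong factor the inequality $\alpha(p)\varphi''-2\varphi_t\le 0$ does \emph{not} follow from the supersolution condition when $\varphi''<0$; with the correct factor one has $2\alpha(p)\varphi''-2\varphi_t=2\bigl(\alpha(\varphi')\varphi''-\varphi_t\bigr)\le 0$ directly.

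Second, your reflection $u\mapsto -u(-\cdot,t)$ does solve \eqref{eq:1Dflow} and does inherit the modulus of continuity, but applying the upper bound to it just reproduces the same upper bound after relabelling. The paper obtains the left-hand inequality in \eqref{eq:modofcont} more simply: once $u(y,t)-u(x,t)\le 2\varphi\bigl(\tfrac{y-x}{2},t\bigr)$ is known for all $0<y-x<L$, apply it with $(x,y)$ replaced by $(y,\,x+L)$ and use $L$-periodicity to get $u(x,t)-u(y,t)\le 2\varphi\bigl(\tfrac{L+x-y}{2},t\bigr)$.
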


{\par\smallskip{\noindent\textbf{Remarks:}}
\begin{enumerate}
\item
The case $\psi(z) \equiv M$ bounds the modulus of continuity of solutions in terms of initial oscillation and elapsed time (as in Theorem \ref{Theorem 1}).
\item
If $\varphi(0,t)=0$ for $t>0$ then \eqref{eq:modofcont} implies $u'(z,t)\leq\varphi'(0,t)$ for every $z$, while if $\varphi(L/2,t)=0$ for $t>0$ then $u'(z,t)\geq \varphi'(L/2,t)$ for every $z$.
\item
If  $\varphi(0,t)=\varphi(L/2,t)=0$ for $t>0$ and $\varphi$ satisfies \eqref{eq:1Dflow} with $\varphi(z,0)=\psi(z)$, then the result is sharp, with equality in the case $u=\varphi$ (extended to be periodic). 
\end{enumerate}

\begin{proof}
Let $\varepsilon>0$, and
define $Z(x,y,t)=u(y,t)-u(x,t)-2\varphi\left(\frac{y-x}{2},t\right)-\varepsilon(1+t)$ 
on $\{(x,y,t):\ x<y<x+L,\ t > 0\}$.  $Z$ is negative for small $t>0$ (since $u$ and $\varphi$ are continuous), and negative where $y-x$ or $L+x-y$ are small for any $t$.
 If $Z$ is not everywhere negative then there exists $t_0>0$ and $x_0<y_0<x_0+L$ such that $Z(x_0,y_0,t_0)=\max\{Z(x,y,t):\ x<y<x+L,\ 0\leq t\leq t_0\}$.  At this point the first order conditions \eqref{eq:xderivcond} and \eqref{eq:yderivcond}  and
the Hessian condition \eqref{eq:Hessiancond} hold.
Since $u$ satisfies \eqref{eq:1Dflow},
{\allowdisplaybreaks 
\begin{align*}
\frac{\partial Z}{\partial t}(x,y,t)
&=\frac{\partial u}{\partial t}(y,t)-
\frac{\partial u}{\partial t}(x,t)-2\frac{\partial\varphi}{\partial t}\left(\frac{y-x}{2},t\right)-\varepsilon\cr
&=\alpha(u'(y,t))u''(y,t)-\alpha(u'(x,t))u''(x,t)
-2\frac{\partial\varphi}{\partial t}\left(\frac{y-x}{2},t\right)-\varepsilon\cr
&<\alpha(\varphi')\left(\frac{\partial^{2}Z}{\partial y^{2}}+\frac{\varphi''}{2}\right)
+\alpha(\varphi')\left(\frac{\partial^{2}Z}{\partial x^{2}}+\frac{\varphi''}{2}\right)+2c\left(\!\!\frac{\partial^2Z}{\partial x\partial
y}-\frac{\varphi''}{2}\right)-2\frac{\partial\varphi}{\partial t}\cr &=\text{\rm Tr}
\left(
\bmatrix \alpha(\varphi')&c\cr c&\alpha(\varphi')\cr\endbmatrix
\bmatrix \frac{\partial^2Z}{\partial x^2}&\frac{\partial^2Z}{
\partial x\partial y}\cr
\frac{\partial^2Z}{\partial y\partial x}&\frac{\partial^2Z}{\partial y^2}\cr
\endbmatrix
\right)+\left(\alpha(\varphi')-c\right)\varphi''-2\frac{\partial\varphi}{\partial
t}.
\cr
\end{align*}
The terms involving second derivatives of $Z$ are non-positive provided }
$\bmatrix \alpha(\varphi')&c\cr c&\alpha(\varphi')\cr\endbmatrix$
is positive semi-definite.  Choosing $c=-\alpha(\varphi')$ we arrive at a contradiction:
$$
0\leq \frac{\partial Z}{\partial t}< 2\left(\alpha(\varphi')\varphi''
-\frac{\partial\varphi}{\partial t}\right)\leq 0.
$$
It follows that $Z$ remains negative, and sending $\varepsilon$ to zero gives
the right-hand inequality in \eqref{eq:modofcont}.   Replacing $y$ by $x+L$ and $x$ by $y$ (and using the periodicity of $u$) yields the left-hand inequality.
\end{proof}

\section{Sharpness of the estimate}\label{sec:optimal}

Theorem \ref{thm:1Dgeneral} gave an estimate on the modulus of continuity of a solution of \eqref{eq:1Dflow} in terms of any supersolution $\varphi\in {\mathcal S}_\psi$, where $\psi$ is the initial modulus of continuity.

\begin{definition}\label{def:minsuper}
If $\psi\in C(0,L/2)$ is positive, then the \emph{minimal supersolution} of \eqref{eq:1Dflow} is
$$
\psi_+(z,t) = \inf\left\{\varphi(z,t):\ \varphi\in{\mathcal S}_\psi\right\}.
$$
\end{definition}

It is an immediate consequence of Theorem \ref{thm:1Dgeneral} that if $\psi$ is a modulus of continuity for the initial data of an $L$-periodic solution $u$ of \eqref{eq:1Dflow}, then $\psi_+(.,t)$ is a modulus of continuity for $u$ at any positive time $t$.  The aim of this section is to prove that the resulting estimate is sharp if $\psi$ is concave.

\begin{theorem}\label{thm:sharpbarrier}
Let $\psi$ be concave and positive on $(0,L/2)$.  Then $\psi_+(.,t)$ is concave for each $t\geq 0$, and $\psi_+$ is a regular solution of \eqref{eq:1Dflow} on $(0,L/2)\times(0,\infty)$.  Furthermore, there exists a sequence $v_k$ of regular solutions to \eqref{eq:1Dflow}, with
$v_k(.,t)$ concave and $v_k(0,t)=v_k(L/2,t)=0$ for each $t>0$, and $v_k(.,0)\leq\psi$, such that $v_k$ converges to $\psi_+$ locally uniformly in $(0,L/2)\times[0,\infty))$.
\end{theorem}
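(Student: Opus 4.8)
The plan is to realise $\psi_+$ as the increasing limit of genuine solutions of \eqref{eq:1Dflow} on $[0,L/2]$ with vanishing lateral data and smooth concave initial data tending up to $\psi$, transferring concavity from the initial data to each such solution and then to the limit. First I would reduce to smooth $\alpha$ (replace $\alpha$ by smooth positive $\alpha_j\to\alpha$ locally uniformly, carry out the construction, and let $j\to\infty$ at the end using the interior estimates below; no uniqueness is required). Then I would fix a sequence $\psi_k\in C^\infty([0,L/2])$ that is concave, satisfies $0\le\psi_k\le\psi$ on $(0,L/2)$, vanishes near $\{0,L/2\}$ (so the corner compatibility conditions hold automatically), and increases pointwise to $\psi$ on $(0,L/2)$, and let $v_k$ solve \eqref{eq:1Dflow} on $(0,L/2)\times(0,\infty)$ with $v_k(0,t)=v_k(L/2,t)=0$ and $v_k(\cdot,0)=\psi_k$. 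The maximum principle gives $0\le v_k\le\|\psi\|_\infty$, and once the gradient bound below is available \eqref{eq:1Dflow} is uniformly parabolic along $v_k$, so $v_k$ exists globally and is smooth up to the parabolic boundary.

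The crucial point is that each $v_k(\cdot,t)$ stays concave. Since $v_k\equiv0$ on each lateral side, $\partial_t v_k=0$ there, so $\alpha(v_k')v_k''=0$ and hence $v_k''=0$ on the lateral boundary, while at $t=0$ we have $v_k''=\psi_k''\le0$. Differentiating \eqref{eq:1Dflow} twice in space shows that $q:=\partial_z^2 v_k$ satisfies the linear parabolic equation $q_t=\alpha(v_k')\,q_{zz}+3\alpha'(v_k')\,q\,q_z+\alpha''(v_k')\,q^3$, whose zeroth-order coefficient is bounded on the compact space--time region. Thus $q\le0$ on the whole parabolic boundary, and the maximum principle (after the usual substitution $e^{-\lambda t}q$ to absorb the sign of the zeroth-order term) gives $q\le0$ throughout; that is, $v_k(\cdot,t)$ is concave for every $t\ge0$.

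Next I would record the comparison consequences. Choosing the $\psi_k$ increasing gives $v_k\le v_{k+1}$; comparing with the time-independent affine supersolutions $\ell\ge\psi_k$ gives $v_k\le\psi_k\le\psi$ (so $\partial_t v_k\le0$), since the infimum of the affine majorants of the concave function $\psi_k$ is $\psi_k$ itself; and comparing with an arbitrary $\varphi\in\mathcal S_\psi$ --- legitimate since $v_k(\cdot,0)\le\psi\le\varphi(\cdot,0)$ and $v_k=0\le\varphi$ on the lateral boundary --- gives $v_k\le\varphi$, hence $v_k\le\psi_+$. Because $v_k(\cdot,t)$ is concave, nonnegative, bounded above by $\psi$, and vanishes at $0$ and $L/2$, the slopes of the chords to the endpoints bound $|\partial_z v_k(z,t)|$ by $\max\{\psi(z)/z,\ \psi(z)/(L/2-z)\}$, uniformly in $k$ and $t$ on every compact subset of $(0,L/2)\times[0,\infty)$. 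With uniform $C^0$ and gradient bounds on compacta of the open region, \eqref{eq:1Dflow} is uniformly parabolic there, and interior estimates (De Giorgi--Nash--Moser applied to the equation for $\partial_z^2 v_k$, then Schauder) give uniform interior $C^{2+\beta,1+\beta/2}$ bounds. Hence $v_\infty:=\lim_k v_k=\sup_k v_k$ is a regular solution of \eqref{eq:1Dflow} on $(0,L/2)\times(0,\infty)$, concave in $z$, with $0\le v_\infty\le\psi$.

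It remains to identify $v_\infty$ with $\psi_+$ and to upgrade the convergence. From the above $v_\infty\le\psi_+$, so it suffices to show $v_\infty\in\mathcal S_\psi$, for then $\psi_+=\inf\mathcal S_\psi\le v_\infty$. It is a nonnegative regular supersolution in the interior with $v_\infty(z,0)=\lim_k\psi_k(z)=\psi(z)$, and continuity at $t=0$ follows from $\psi_k\uparrow\psi$ together with $v_\infty\le\psi_+\le\psi$. Continuity up to the lateral boundary is the one genuinely delicate point: one must know that $\psi_+$, hence $v_\infty$ (which lies between $0$ and $\psi_+$), vanishes there, and this needs a barrier in $\mathcal S_\psi$ that vanishes on the lateral sides --- the solution of \eqref{eq:1Dflow} with constant initial data $\|\psi\|_\infty$ and zero lateral data, or, more robustly, a barrier built from a translating solution as in Section~\ref{sec:suff}. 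Granting this, $v_\infty$ extends continuously by $0$ on the lateral boundary, so $v_\infty\in\mathcal S_\psi$ and $v_\infty=\psi_+$; in particular $\psi_+$ is concave in $z$ and a regular solution of \eqref{eq:1Dflow}. Finally $v_k\uparrow\psi_+$ pointwise on $(0,L/2)\times[0,\infty)$ with every function continuous there, so Dini's theorem promotes the convergence to local uniform convergence. I expect the main obstacles to be (i) the fact that the Dirichlet condition \emph{forces} $v_k''=0$ on the lateral sides, which is exactly what makes the concavity maximum principle go through, and (ii) the barrier at the lateral boundary needed to pin down the boundary behaviour of $\psi_+$; choosing the approximating data $\psi_k$ with all the listed properties is routine but needs some care near the endpoints.
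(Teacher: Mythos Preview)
Your overall plan is sound and close to the paper's, but it diverges at two points, and at the second of these your reasoning is not quite right.

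\textbf{Concavity of $v_k$.} You differentiate twice and run a maximum principle on $q=v_k''$, which forces you to smooth $\alpha$ first and to know that $v_k$ is $C^2$ up to the lateral boundary so that $q=0$ there. The paper's argument is cleaner and needs none of this: since $\psi_k$ is concave it is a time-independent supersolution with zero lateral data, so by comparison $v_k\le\psi_k$; iterating gives $v_k(\cdot,t+s)\le v_k(\cdot,t)$, hence $\partial_t v_k\le0$, and then the equation $\alpha(v_k')v_k''=\partial_t v_k$ yields $v_k''\le0$ directly. This works for merely continuous $\alpha$.

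\textbf{Identifying $v_\infty$ with $\psi_+$.} Here there is a real issue. You argue that $v_\infty\in\mathcal S_\psi$ by claiming ``one must know that $\psi_+$, hence $v_\infty$, vanishes'' on the lateral sides. But membership in $\mathcal S_\psi$ does not require vanishing on the lateral boundary, and in fact the paper later shows (proof of Corollary~\ref{cor:bddbarrier}) that $\psi_+(0,t)$ can be strictly positive when $\lim_{\xi\to\infty}B(\xi)<\infty$. In that regime no barrier in $\mathcal S_\psi$ vanishes at the sides, your proposed barrier argument cannot succeed, and the continuous extension of $v_\infty$ to $[0,L/2]\times(0,T]$ (with whatever boundary value it has) is left unverified.

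The paper avoids this entirely by a rescaling trick. It chooses the approximations to satisfy the two-sided bound
\[
\psi_k(z)\le\psi(z)\le\tfrac{1+k}{k}\,\psi_k\!\left(\tfrac{L}{4}+\tfrac{k}{1+k}\bigl(z-\tfrac{L}{4}\bigr)\right),
\]
and sets $\varphi_k(z,t)=\tfrac{1+k}{k}\,v_k\!\left(\tfrac{L}{4}+\tfrac{k}{1+k}(z-\tfrac{L}{4}),\,(\tfrac{k}{1+k})^2t\right)$. For $z\in[0,L/2]$ the spatial argument of $v_k$ lies in $\bigl[\tfrac{L}{4(1+k)},\tfrac{L}{2}-\tfrac{L}{4(1+k)}\bigr]$, strictly inside $(0,L/2)$, so $\varphi_k$ is automatically regular and continuous on all of $[0,L/2]\times[0,T]$, and the right-hand inequality above gives $\varphi_k(\cdot,0)\ge\psi$; thus $\varphi_k\in\mathcal S_\psi$. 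One then has the sandwich $v_k\le\psi_+\le\varphi_k$, and the interior Lipschitz-in-$z$ and H\"older-in-$t$ estimates (uniform in $k$) force $\varphi_k-v_k\to0$ locally uniformly, giving $v_k\to\psi_+$ without ever examining the lateral boundary. This is the device you are missing.
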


\begin{proof}
Choose a sequence of smooth concave functions $\psi_k$ on $[0,L/2]$ with $\psi_k(0)=\psi_k(L/2)=0$, such that 
\begin{equation}\label{eq:initialcomparison}
\psi_k(z)\leq \psi(z)\leq \frac{1+k}{k}\psi_k\left(\frac{L}{4}+\frac{k}{1+k}\left(z-\frac{L}{4}\right)\right)
\end{equation}
for $0\leq z\leq L/2$ (such a sequence can be constructed from mollifications of $\psi$).  Let $v_k$ be the regular solution of
\begin{align*}
\frac{\partial v_k}{\partial t}(z,t) &= \alpha(v_k'(z,t))v_k''(z,t),\quad (z,t)\in[0,L/2]\times[0,\infty);\\
v_k(z,0)&=\psi_k(z),\quad z\in[0,L/2];\quad
v_k(0,t)=v_k(L/2,t)=0,\quad t\geq 0.
\end{align*}
(see Appendix A for the regularity results required to prove the existence of $v_k$)
\begin{lemma}
For each $k$ and each $t>0$, $v_k(.,t)$ is concave.
\end{lemma}

\begin{proof} 
The time-independent function $\psi_k(z)$ is a supersolution of \eqref{eq:1Dflow} with zero boundary data, so by the comparison principle, $v_k(z,t)\leq \psi_k(z)$ for all $t\geq 0$.  Applying the comparison principle again, we find that the solution of \eqref{eq:1Dflow} with initial data $v_k(z,s)$ is less than or equal to the solution with initial data $\psi_k(z)=v_k(z,0)$ for all positive times, i.e. $v_k(z,s+t)\leq v_k(z,t)$ for all $t\geq 0$ and $s\geq 0$.  Since $v_k$ is regular, $\frac{\partial v_k}{\partial t}$ exists, and $\frac{\partial}{\partial t}v_k(z,t)=\lim_{s\to 0_+}s^{-1}\left(v_k(z,t+s)-v_k(z,t)\right)\leq 0$ for all $z$ and all $t\geq 0$.  Since \eqref{eq:1Dflow} is satisfied and $\alpha$ is positive, $v_k''(z,t)\leq 0$.
\end{proof}

\begin{lemma}\label{lem:oscvk}
For any small $\delta>0$ there exists $C_\delta(\alpha, \sup\psi)$ independent of $k$ such that 
$$
\sup_{(z,y,t)\in[\delta,L/2-\delta]\times[0,L/2]\times[0,\infty)}\frac{|v_k(y,t)-v_k(z,t)|}{|y-z|} + \sup_{z\in[\delta,L/2-\delta],\ 0\leq t_1< t_2}
\left|\frac{v_k(z,t_2)-v_k(z,t_1)}{\sqrt{t_2-t_1}}\right|\leq C_\delta
$$
and
$$
\sup_{[\delta,L/2-\delta]\times[0,\infty)}\left|v_k'(z,t)\right|+\sup_{[\delta,L/2-\delta]\times[\delta,\infty)}\left(\left|v_k''\right|+\left|\frac{\partial v_k}{\partial t}\right|\right)\leq C_\delta.
$$
Furthermore, $\{v_k'':\ k\in\NN\}$ and $\left\{\frac{\partial v_k}{\partial t}:\ k\in\NN\right\}$ are equicontinuous on $[\delta,L/2-\delta]\times[\delta,\infty)$.
\end{lemma}

\begin{proof}
Since $v_k(.,t)$ is concave, we have for $z\in[\delta,L/2-\delta]$, $y\neq z$ in $[0,L/2]$ and $t\geq 0$:
\begin{equation}\label{eq:xoscest}
-\frac{\sup\psi}{\frac{L}{2}-\delta}\leq\frac{v_k(\frac{L}2,t)-v_k(z,t)}{\frac{L}2-z}\leq \frac{v_k(z,t)-v_k(y,t)}{z-y}\leq \frac{v_k(z,t)-v_k(0,t)}{z}\leq \frac{\sup\psi}{\delta}.
\end{equation}
Let $\tilde C_\delta=\max\left\{\frac{\sup\psi}{\delta},\frac{\sup\psi}{L/2-\delta}\right\}$.  Taking $y\to z$ in \eqref{eq:xoscest} gives $|v_k'(z,t)|\leq \tilde C_\delta$ for $(z,t)\in[\delta,L/2-\delta]\times[0,\infty)$.
Also, \eqref{eq:xoscest} implies
$v_k(y,t)\geq v_k(z,t)-\tilde C_\delta|y-z|$
for any $y\in[0,L/2]$, $z\in[\delta,L/2-\delta]$ and $t\geq0$.   Define $\alpha_\delta = \sup_{|p|\leq\tilde C_\delta}\alpha(p)$.  Fix $z$ and define $w(y,s)=v_k(z,t)-\tilde C_\delta\sqrt{s\alpha_\delta}\,\mu\left(\frac{y-z}{\sqrt{s\alpha_\delta}}\right)$, where $\mu(\xi) = \frac{2}{\sqrt{\pi}}\exp\left\{-\frac{\xi^2}{4}\right\}+\xi\text{erf}\left(\frac{\xi}{2}\right)$ (chosen such that $\sqrt{t}\mu(x/\sqrt{t})$ solves the heat equation with initial data $|x|$). Note $|w'(y,s)|\leq \tilde C_\delta$, $w''\leq 0$, and
$\frac{\partial w}{\partial s} = \alpha_\delta w''\leq \alpha(w')w''$,
so $w$ is a subsolution of \eqref{eq:1Dflow}.  By the comparison principle $v_k(y,t+s)\geq w(y,s)$ for all $y$ and all $s>0$, and in particular taking $y=z$ we find
$$
v_k(z,t)\geq v_k(z,t+s)\geq v_k(z,t)-\frac{2\tilde C_\delta\sqrt{s\alpha_\delta}}{\sqrt{\pi}}.
$$
Therefore
\begin{equation}\label{eq:tcontest}
\sup_{z\in[\delta,L/2-\delta],\ 0\leq t_1\leq t_2}\frac{\left|v_k(z,t_2)-v_k(z,t_1)\right|}{\sqrt{t_2-t_1}}\leq \frac{2\tilde C_\delta\sqrt{\alpha_\delta}}{\sqrt{\pi}}.
\end{equation}
It follows from the bound on $v_k'$ that $v_k$ is a solution of a uniformly parabolic equation on $[\delta,L/2-\delta]\times[0,\infty)$, and Equation \eqref{eq:boundw2} applies to give $k$-independent bounds on $v_k''$ and $\frac{\partial v_k}{\partial t}$ on $[2\delta,L/2-2\delta]\times[2\delta,\infty)$, and \eqref{eq:Holderw} and \eqref{eq:ctsd2u} control their moduli of continuity.
\end{proof}

Define $\varphi_k(z,t) = \frac{1+k}{k}v_k\left(\frac{L}{4}+\frac{k}{1+k}\left(z-\frac{L}{4}\right),\left(\frac{k}{1+k}\right)^2t\right)$.  Then $\varphi_k\in{\mathcal S}_\psi$, so $\psi_+(z,t)\leq \varphi_k(z,t)$ for every $k$.  For any $\varphi\in{\mathcal S}_\psi$ the comparison principle implies $\varphi\geq v_k$, and hence $v_k(z,t)\leq \psi_+(z,t)\leq \varphi_k(z,t)$.  
By Lemma \ref{lem:oscvk} we have for $\delta\leq z\leq L/2-\delta$ and $t\geq 0$
\begin{align*}
\varphi_k(z,t)-v_k(z,t)&=\frac{1+k}{k}v_k\!\left(\!\frac{L}{4}+\frac{k}{1+k}\!\left(\!z-\frac{L}4\right),\left(\frac{k}{1+k}\right)^2\!t\right)-v_k(z,t)\\
&\leq\frac1k\sup v_k + \left|v_k\!\left(\!\frac{L}{4}\!+\!\frac{k}{1+k}\!\left(\!z-\frac{L}4\right),\left(\frac{k}{1+k}\right)^2\!t\right)
\!-v_k\!\left(\!z,\left(\frac{k}{1+k}\right)^2\!t\right)\right|\\
&\quad\null + \left|v_k\!\left(z,\left(\frac{k}{1+k}\right)^2t\right)-v_k(z,t)\right|\\
&\leq \frac1k\sup\psi + \frac1{1+k}\left|z-\frac{L}4\right|\tilde C_\delta + \frac{2\tilde C_\delta\sqrt{\alpha_\delta(1+2k)}}{(1+k)\sqrt{\pi t}}\\
&\leq C\left(\frac{1}{k}+\sqrt{\frac{t}{k}}\right),
\end{align*}
so that $v_k$ and $\varphi_k$ both converge locally uniformly to $\psi_+$ on $[\delta,L/2-\delta]\times[0,\infty)$.
Therefore $\psi_+$ is locally a uniform limit of concave functions, hence concave.  Furthermore, the estimates on $v_k''$ and $\frac{\partial v_k}{\partial t}$ and their moduli of continuity imply that $\psi_+$ is a regular solution of \eqref{eq:1Dflow} on $(0,L/2)\times(0,\infty)$.
\end{proof}
 
\begin{corollary}\label{cor:optimalbound}
Every $L$-periodic solution $u$ of \eqref{eq:1Dflow} for which $\psi$ is a modulus of continuity for $u(.,0)$ has $\psi_+(.,t)$ as a modulus of continuity for $u(.,t)$ each $t>0$. This estimate is sharp:  For any $(z,t)\in (0,L/2)\times(0,\infty)$ and any $\rho<\psi_+(z,t)$ there exists an $L$-periodic regular solution $u$ of \eqref{eq:1Dflow} such that $\psi$ is a modulus of continuity for $u(.,0)$ but there exists $x\in\RR$ with $|u(x+2z,t)-u(x,t)|>2\rho$.
\end{corollary}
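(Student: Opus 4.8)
The first assertion is nothing but Theorem~\ref{thm:1Dgeneral} followed by an infimum: each $\varphi\in\mathcal S_\psi$ makes $\varphi(\cdot,t)$ a modulus of continuity for $u(\cdot,t)$, and since both inequalities in \eqref{eq:modofcont} are preserved under taking the pointwise infimum over $\varphi\in\mathcal S_\psi$, the same is true of $\psi_+(\cdot,t)=\inf_\varphi\varphi(\cdot,t)$; concavity of $\psi$ plays no role here. All the content is in the sharpness statement.

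For that, I would fix $(z,t)\in(0,L/2)\times(0,\infty)$ and $\rho<\psi_+(z,t)$ and use the approximating sequence $v_k$ furnished by Theorem~\ref{thm:sharpbarrier}: regular solutions of \eqref{eq:1Dflow} on $[0,L/2]\times[0,\infty)$, each $v_k(\cdot,s)$ concave with $v_k(0,s)=v_k(L/2,s)=0$, with $v_k(\cdot,0)\le\psi$, and $v_k\to\psi_+$ locally uniformly on $(0,L/2)\times[0,\infty)$. Since $z\in(0,L/2)$, I may choose $k$ so large that $v_k(z,t)>\rho$. The competitor is then the $L$-periodic function $u$ obtained by reflecting $v_k$ oddly across $x=0$:
$$u(x,s)=v_k(x,s)\ \ (0\le x\le L/2),\qquad u(-x,s)=-v_k(x,s),\qquad u(x+L,s)=u(x,s).$$

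Three things then have to be checked. (i) $u$ is an $L$-periodic regular solution of \eqref{eq:1Dflow}: off the reflection set $\tfrac L2\ZZ$ this is automatic, because $u(x)\mapsto-u(-x)$ leaves \eqref{eq:1Dflow} invariant (it fixes $u'$ and reverses the signs of $u''$ and of $u_t$); and at a point of $\tfrac L2\ZZ$ the function $u$ is identically zero in $s$, so \eqref{eq:1Dflow} together with $\alpha>0$ forces $v_k''=0$ there, which is exactly the compatibility needed for the odd reflection to be $C^2$ across that point (taking the $\psi_k$ of Theorem~\ref{thm:sharpbarrier} affine near $0$ and $L/2$, still compatible with \eqref{eq:initialcomparison} since its right-hand side only evaluates $\psi_k$ away from the endpoints, makes $u(\cdot,0)\in C^2$ as well). (ii) $\psi$ is a modulus of continuity for $u(\cdot,0)$: here $u(\cdot,0)$ is the odd $L$-periodic extension of the concave function $\psi_k$, so $\partial_x u(\cdot,0)$ is even and unimodal about each point of $L\ZZ$; hence for $0<d<L$ the maximum of $u(x+d,0)-u(x,0)$ over $x$ is attained when $[x,x+d]$ is centred at a point of $L\ZZ$ and equals $2\psi_k(d/2)\le2\psi(d/2)$, which is the right-hand inequality of \eqref{eq:modofcont}, and applying this with $d$ replaced by $L-d$ together with periodicity gives the left-hand one. (iii) The estimate is violated at time $t$: taking $x=-z$,
$$u(x+2z,t)-u(x,t)=u(z,t)-u(-z,t)=v_k(z,t)-\bigl(-v_k(z,t)\bigr)=2v_k(z,t)>2\rho.$$

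The main obstacle is the construction of $u$ in the sharpness half: one needs a genuine $L$-periodic solution that simultaneously respects the prescribed initial modulus of continuity and realizes a two-point difference as large as $2\psi_+(z,t)$. The odd reflection is the device that makes both possible at once — it is exactly the symmetry preserving both \eqref{eq:1Dflow} and the zero-boundary solutions $v_k$ of Theorem~\ref{thm:sharpbarrier}, it converts the cheap inequality $v_k(\cdot,0)=\psi_k\le\psi$ into control of the modulus of continuity of $u(\cdot,0)$ through the concavity of $\psi_k$, and it arranges that the relevant oscillation $u(z,t)-u(-z,t)$ equals $2v_k(z,t)\to2\psi_+(z,t)$. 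The two genuinely technical points — the $C^2$ matching across $\tfrac L2\ZZ$ and the unimodality of $\partial_x u(\cdot,0)$ — are handled respectively by the equation (which forces $v_k''=0$ on the boundary) and by concavity of $\psi_k$, and neither should be serious.
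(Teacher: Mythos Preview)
Your proposal is correct and follows essentially the same route as the paper: both derive the estimate by passing to the infimum in Theorem~\ref{thm:1Dgeneral}, and both prove sharpness via the odd $L$-periodic extension of the approximants $v_k$ from Theorem~\ref{thm:sharpbarrier}, exhibiting the violation at $x=-z$. The one variation is in your step~(ii): you locate $\max_x\bigl(u(x+d,0)-u(x,0)\bigr)$ via the evenness and unimodality of $u'(\cdot,0)$, whereas the paper proves by a direct case analysis on the positions of $x,y$ relative to the half-periods that any odd $L$-periodic function which is concave and positive on $(0,L/2)$ is its own modulus of continuity (and hence has modulus $\psi$ once $v_k(\cdot,0)\le\psi$); you also treat the $C^2$ matching across $\tfrac{L}{2}\ZZ$ more explicitly than the paper, which simply asserts that the odd reflection yields a regular solution.
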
 

\begin{proof}
The estimate was established in the remarks after Definition \ref{def:minsuper}.  The sharpness follows from Theorem \ref{thm:sharpbarrier}:  The functions $v_k$ extend by odd reflections to $L$-periodic regular solutions of \eqref{eq:1Dflow}, and for $k$ sufficiently large $v_k(z,t)-v_k(-z,t)=2v_k(z,t)>2\rho$.  It remains to check that $v_k(.,0)$ has modulus of continuity $\psi$.  We show more generally that if $v$ is a function which is concave and positive on $[0,L/2]$, $L$-periodic and odd  then $v$ has modulus of continuity $v$ (hence also modulus of continuity $\psi$ if $v\leq\psi$ on $(0,L/2)$):  Consider any $x,y$ with $0<y-x<L$.

\noindent{\bf Case 1:} $nL\leq x<\left(n+\frac12\right)L$ for some $n\in\ZZ$.  By periodicity we can assume $n=0$.  Then we have three sub-cases:

If $x<y\leq \left(n+\frac12\right)L$, then by concavity
$v(y-x)\geq v(y)-v(x)\geq -v\left(\frac{L}2+x-y\right)$, and also by concavity $2v\left(\frac{y-x}{2}\right)\geq v(y-x)$ and $2v\!\left(\!\frac{L+x-y}{2}\!\right)\!\geq\! v\!\left(\frac{L}2\!+\!x\!-\!y\right)\!+\!v\!\left(\frac{L}2\right)\!=\!v\!\left(\frac{L}2\!+\!x\!-\!y\right)$.

If $x<\left(n+\frac12\right)L< y \leq (n+1)L$, then $2v\left(\frac{y-x}{2}\right)\geq 0\geq v(y)-v(x)=-v(x)-v(L-y)\geq -2v\left(\frac{L+x-y}{2}\right)$.

The remaining possibility is $\left(n+\frac12\right)L<y<x+L$.  Then $v(y)-v(x) = v(y-L)-v(x)$, so by the first sub-case above,
$$
-2v\!\left(\frac{L+x-y}{2}\right)\!=\!-2v\!\left(\frac{x-y+L}{2}\right)\leq v(y)-v(x)\leq 2v\!\left(\frac{L\!+\!y\!-\!L\!-\!x}{2}\right)\!=\!2v\!\left(\frac{y-x}{2}\right).
$$

\noindent{\bf Case 2:} $\left(n+\frac12\right)L\leq x<(n+1)L$  (as before assume $n=0$).  Then $v(y)-v(x)=v(L-x)-v(L-y)$, so by Case 1,
$$
-2v\left(\frac{L+(L-y)-(L-x)}{2}\right)\leq v(y)-v(x)\leq 2v\left(\frac{L-x-L+y}{2}\right),
$$
which is the required result after rearrangement.
\end{proof}

\section{Translating solutions}\label{sec:trans}

This section concerns translating solutions of Equation \eqref{eq:1Dflow}.  These will be used in the next section to obtain estimates on the minimal supersolution $\psi_+$.

Translating solutions are special solutions of Equation \eqref{eq:1Dflow} of the form
$$
v(x,t) = v(x,0)-Vt
$$ for some constant $V$.  These satisfy the ordinary differential
equation
\begin{equation}\label{eq:translator}
\alpha(v')v'' = -V.
\end{equation}
The solutions of \eqref{eq:translator} can be obtained by integration:  Define $A(\xi) = \int_{0}^{\xi}\alpha(s)\,ds$ and
$B(\xi) = \int_{0}^{\xi}s\alpha(s)\,ds$.  Then Equation \eqref{eq:translator} implies
$$
A(v'(s))-A(v'(s_{0})) = -V(s-s_{0})
$$
and
$$
B(v'(s))-B(v'(0)) = -V(v(s)-v(s_{0})).
$$
Combining these gives the expression
\begin{equation}\label{eq:transsoln}
v(s) = v(s_0) -\frac{1}{V}\left(B\circ A^{-1}\left(A(v'(s_0))-V(s-s_0)\right)-B(v'(s_0))\right).
\end{equation}
Alternatively, the graph of the translating solution can be described parametrically:
\begin{equation}\label{eq:transparam}
\tx{graph}(v) = \left\{(x_0,y_0)+\frac{1}{V}\left(A(p),B(p)\right):\ p\in\RR\right\}.
\end{equation}
The parameter $p$ then gives the slope of the graph at the corresponding point.

Note that all translating solutions are related by translation (horizontally and/or vertically) and scaling.  Also note that
$\lim_{\xi\to\infty}A(\xi) = \infty$ if and only if the domain of definition of the translating solution extends to $s=-\infty$, and
$\lim_{\xi\to-\infty}A(\xi) = -\infty$ if and only if the domain of definition extends to $s=\infty$.  If $\lim_{\xi\to\infty}A(\xi) = \infty$ then
$\lim_{\xi\to\infty}B(\xi)=\infty$, and if $\lim_{\xi\to-\infty}A(\xi) = -\infty$ then $\lim_{\xi\to-\infty}B(\xi)=\infty$, and so in these cases
the translating solutions extend to $-\infty$ in the vertical direction as $s\to\infty$ or $s\to-\infty$ respectively.  However there can
be cases where the range of $A$ is finite but the range of $B$ is infinite, so that the function $v$ approaches $-\infty$ at a finite value of $s$, or where the ranges of both $A$ and $B$ are finite, so that the gradient of the function becomes infinite with finite values of both $s$ and $v$.

\begin{example}\label{exa:heateq1}
For the heat equation, $\alpha=1$ everywhere, so $A(\xi) = \xi$ and $B(\xi) = \frac12\xi^{2}$.  The translating solutions are therefore
parabolae $\left\{\frac{1}{V}(\xi,\frac12\xi^{2})\right\}=\left\{(x,\frac{V}{2}x^{2})\right\}$, which extend to infinity both horizontally and vertically.
\end{example}

\begin{example}
For the curve-shortening flow, $\alpha(p)=\frac{1}{1+p^{2}}$, so 
$$
A(\xi) = \arctan(\xi)
$$
and
$$
B(\xi) = \log\sqrt{1+\xi^{2}}.
$$
Thus the translating solutions have finite width but extend to $-\infty$ at both ends in the vertical direction.  Note that since
$\xi=\tan A=\tan(sV)$, we have (up to translations)
$$
v(s) = -\frac{1}{V}\log\cos(Vs).
$$
\end{example}

\begin{example} In the homogeneous case, $\alpha(p)=|p|^{-\gamma}$, so such $\alpha$ are \emph{not} $C^0(\RR)$.  Nevertheless,  we can find  translating solutions $v(s,t)=v(s)-Vt$, with
\begin{equation*}
v(s)=\begin{cases} -\frac{1}V \frac{|sV(1-\gamma)|^{\frac{2-\gamma}{1-\gamma}}}{2-\gamma},
&\gamma\not=\lbrace 1, 2\rbrace \\
-\frac{1}V \exp (\pm V s), & \gamma=1 \\
-\frac{1}V \log |sV|, & \gamma=2. 
\end{cases}
\end{equation*}
The nature of the solution is determined by the exponent:
\begin{equation*}
\frac{2-\gamma}{1-\gamma}\in 
\begin{cases}
(2,\infty) & \text{for } 0<\gamma<1 \\
(-\infty,0) & \text{for } 1<\gamma<2 \\
(0,1) & \text{for } \gamma>2,
\end{cases}\end{equation*}
with $v$ being
\begin{align*}
&\text{unbounded horizontally and vertically } & 0<\gamma\le 1, \\
&\text{unbounded vertically, but with a one-sided bound horizonally } & 1<\gamma\le 2, \\
&\text{bounded on one side both vertically and horizontally } & \gamma>2.
\end{align*}
\end{example}

\begin{example}  \label{exa:asymhom} We can also find translating solutions in  the \emph{asymptotically}  homogeneous case, $\alpha(p)\sim |p|^{-\gamma}$ as $|p|\rightarrow \infty$.  Here,
$\lim_{\xi\to\infty}A(\xi) = \infty$ for $\gamma\leq 1$, and $\lim_{\xi\to\infty}A(\xi)<\infty$ for $\gamma>1$, while
$\lim_{\xi\to\infty}B(\xi) = \infty$ for $\gamma\leq 2$, and $\lim_{\xi\to\infty}B(\xi)<\infty$ for $\gamma>2$.  Thus translating solutions 
are unbounded both horizontally and vertically if $\gamma\geq 1$, vertically but not horizontally if $1<\gamma\leq 2$, and neither vertically nor horizontally if $\gamma>2$.  The curve shortening flow is an example of  the critical case $\gamma=2$, while the non-parametric curve-shortening flow 
$$
\frac{\partial u}{\partial t} = \frac{u''}{\left(1+(u')^{2}\right)^{3/2}}
$$
(where the curve moves vertically with speed equal to the curvature) corresponds to $\gamma=3$.  In the latter example the translating solution is (up to scaling and translations)
$$
v(s) = -\sqrt{1-s^{2}},
$$
(semicircles have constant curvature and hence constant speed in this flow).  

More generally for asymptotically homogeneous cases, the translating solutions are asymptotic to those for the homogeneous cases:
\begin{equation*}
v(s)\rightarrow  
\begin{cases}
-  \dfrac{1}{V(2-\gamma)}\left|V(1-\gamma)s\right|^ {\frac{2-\gamma}{1-\gamma}} \text{ as } |s|\rightarrow \infty,& 0<\gamma<1 \\
-\dfrac 1 V \exp (V|s|) \text{ as } |s|\rightarrow \infty,& \gamma=1 \\
-\dfrac 1 {V(2-\gamma)}\left|V(1-\gamma)(s^\pm -s)\right|^{\frac{2-\gamma}{1-\gamma}} \text{ as } s\rightarrow s^\pm, & 1<\gamma<2 \text{ and } \gamma>2 \\
\dfrac 1 V \log \left|V(s-s^\pm)\right| \text{ as } s\rightarrow s^\pm, & \gamma=2
\end{cases}\end{equation*}
where $s^-=-V^{-1}\int_0^{\infty}\alpha(s)\,ds$ and $s^+=V^{-1}\int_{-\infty}^0\alpha(s)\,ds$. 
\end{example}

The dichotomy between finite and infinite vertical extent for translating solutions corresponds precisely to the sufficient condition for
Lipschitz bounds for bounded initial data which we provide in Corollary \ref{cor:bddbarrier} in the following section.

\section{Necessary and sufficient condition for Lipschitz bound}\label{sec:suff}

In this section we use supersolutions constructed from translating solutions to
give a sufficient condition for the minimal supersolution $\psi_+$ to have bounded gradient for positive times.  In many cases of interest this is also a necessary condition, though this is not always true.  By Corollary \ref{cor:optimalbound} these results amounts to necessary and sufficient conditions for a given coefficient $\alpha$ and initial modulus of continuity $\psi$ to imply gradient bounds for positive times.

\begin{theorem}\label{thm:barriercriterion}
Let $\psi$ be positive and concave on $(0,L/2)$, and let $\psi_+$ be the minimal supersolution of Definition \ref{def:minsuper}.  Let $b$ be minus the Legendre transform of $\psi$, so that 
\begin{equation*}
\psi(x) = \inf\left\{x\cdot z +b(z):\ z\in{\mathbb R}\right\}
\end{equation*}
and
\begin{equation*}
b(z) = \sup\left\{ \psi(x) - x\cdot z:\ 0\leq x\leq L\right\}.
\end{equation*}
Also define $\tilde b(z) = b(z)+\frac{Lz}{2}$.
Then $\psi_+(.,t)$ has gradient bounded above for each $t>0$ if 
\begin{equation}\label{eq:1Dcriterion}
\inf_{z\in\RR}
\frac{b(z)}{\left(\lim_{z'\to\infty}\int_z^{z'} (s-z)\alpha(s)\,ds\right)^{1/2}} = 0.
\end{equation}
Furthermore, if this holds then we have the estimate
\begin{equation}\label{eq:gradest}
\psi_+'(x,t) \leq \inf\left\{Z:\ \int_z^{Z}(s-z)\alpha(s)\,ds \geq \frac{b(z)^2}{t}\tx{ for some }z\right\}
\end{equation}
for all $x\in[0,L/2]$. Similarly, $\psi_+(.,t)$ has gradient bounded below at for each $t>0$ if
\begin{equation}
\inf_{z\in\RR}
\frac{\tilde b(z)}{\left(\lim_{z'\to-\infty}\int_{z'}{z} (z-s)\alpha(s)\,ds\right)^{1/2}}=0.
\end{equation}
In this case the following estimate holds for each $x\in[0,L/2]$:
\begin{equation}\label{eq:rightgradest}
\psi_+'\left(x,t\right)\geq \sup\left\{Z:\ \int_Z^z(z-s)\alpha(s)\,ds \geq \frac{\tilde b(z)^2}{t}\tx{ for some }z\right\}.
\end{equation}
\end{theorem}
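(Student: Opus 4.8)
The plan is to prove the upper-gradient statement — the lower one being its mirror image — by comparing $\psi_+$ (which, by Theorem~\ref{thm:sharpbarrier}, is a concave regular solution of \eqref{eq:1Dflow} on $(0,L/2)\times(0,\infty)$) with translating solutions from Section~\ref{sec:trans}, and then turning pointwise bounds into a gradient bound via concavity.

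I first reduce to a pointwise estimate near $x=0$. Since $\psi$ is concave, the stationary function $\psi$ is a supersolution, so $\psi_+\le\psi$; when $\psi(0^+)=0$ this already forces $\psi_+(0^+,t)=0$ for $t>0$, and in general the same follows by comparing $\psi_+$ with its odd reflection across $x=0$, which is again a solution of \eqref{eq:1Dflow} for every $\alpha$. Granting $\psi_+(0^+,t)=0$, concavity of $\psi_+(\cdot,t)$ gives $\psi_+'(0^+,t)=\lim_{x\to 0^+}\psi_+(x,t)/x=\sup_{x}\psi_+(x,t)/x$, and $\psi_+'(x,t)\le\psi_+'(0^+,t)$ for every $x$; so it is enough to show, for a single admissible $z$, that $\psi_+(x,t)\le Z(z,t)\,x$ for all sufficiently small $x>0$.

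Fix $z$ with $b(z)<\infty$ and $b(z)^2/t< I(z):=\lim_{z'\to\infty}\int_z^{z'}(s-z)\alpha(s)\,ds$ — hypothesis \eqref{eq:1Dcriterion} says precisely that such $z$ exist for every $t>0$ — and set $Z=Z(z,t):=\inf\{Z'\colon \int_z^{Z'}(s-z)\alpha(s)\,ds\ge b(z)^2/t\}<\infty$. With $A(\xi)=\int_0^\xi\alpha(s)\,ds$, $B(\xi)=\int_0^\xi s\alpha(s)\,ds$, the identity $\int_z^Z(s-z)\alpha(s)\,ds=(B(Z)-B(z))-z(A(Z)-A(z))$ together with the parametric description \eqref{eq:transparam} expresses this integral in terms of the geometry of a translating cap of speed $V$ — as, up to the factor $V$, a vertical gap, measured along a line of slope $z$, between the support line of slope $z$ of that cap and its point of slope $Z$. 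I would use this to single out a translating cap $w$, determined by $z$ and $t$ (with $\partial_x w=Z$ at the relevant boundary point), whose profile at $t=0$ dominates the supporting line $x\mapsto zx+b(z)\ (\ge\psi)$ near $x=0$, and then run the comparison principle for solutions of \eqref{eq:1Dflow} between $\psi_+$ and $w$ on a region whose lateral boundaries are curves along which $w=\psi\ (\ge\psi_+)$ and whose bottom is a segment along which $w(\cdot,0)\ge\psi=\psi_+(\cdot,0)$, supplemented (when $\psi(0^+)>0$) by the boundary piece $x=0$, on which $w\ge 0=\psi_+$. The comparison gives $\psi_+\le w$ on that region, and a computation with the cap's geometry yields $\psi_+(x,t)\le w(x,t)\le Z(z,t)\,x$ for $x$ in a right-neighbourhood of $0$; hence $\psi_+'(0^+,t)\le Z(z,t)$, and, by the reduction, $\psi_+'(x,t)\le Z(z,t)$ for all $x$. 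Taking the infimum over admissible $z$ gives \eqref{eq:gradest}, its finiteness being exactly \eqref{eq:1Dcriterion}; the degenerate case $b(z)=0$ (so $\psi(x)\le zx$ on $[0,L]$) is handled by the linear competitor $zx$.

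The lower bound follows from the identical scheme applied near $x=L/2$ with caps of negative slope; because \eqref{eq:1Dflow} is symmetric under $x\mapsto -x$ only when $\alpha$ is even, one genuinely repeats the construction on that side, where $\tilde b(z)=b(z)+Lz/2$ is just the Legendre transform of $\psi$ recentred at $x=L/2$ — forced by the $L$-periodic form of Definition~\ref{def:modcont} — and the integrals run over $(-\infty,z]$; this yields \eqref{eq:rightgradest}. The heart of the matter, and the main obstacle, is making the comparison above actually go through. The competing requirements on the cap — broad enough to serve as a barrier on the chosen region, dominating $\psi$ near $x=0$ at $t=0$, yet vanishing at $(0,t)$ so that the quotient $\psi_+(x,t)/x$ can be controlled — are exactly what force $b(z)$ (which decreases to $\psi(0^+)$ as $z\to\infty$), rather than $\sup\psi$, to govern the estimate, and what force one, for small $t$, to take $z$ and $Z$ large and to localise to a region of $x$-width of order $\sqrt t$. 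Making this work for $\psi$ merely concave — possibly with $\psi'(0^+)=+\infty$, when no translating cap dominates $\psi$ on all of $(0,L/2)$ — and obtaining the precise constant in \eqref{eq:gradest} rather than merely a finite bound, requires replacing $\psi$ by the smooth concave approximants $\psi_k$ of Theorem~\ref{thm:sharpbarrier} (so the cap can hug the supporting line $zx+b(z)$ near $x=0$) and passing to the limit; and the step $\psi_+(0^+,t)=0$ when $\psi(0^+)>0$ must be justified via the odd reflection.
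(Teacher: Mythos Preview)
Your approach is the same as the paper's: place a translating cap with slope $Z$ at $x=0$, compare with $\psi_+$ via the maximum principle, and read off the gradient bound by concavity. The paper carries this out more concretely and more simply than you propose. It fixes the speed $V=b(z)/t_0$ and positions the cap so that $x(Z)=0$, $y(Z,0)=b(z)$; then the comparison region is simply the strip $[0,x(z)]\times[0,t_0]$ (where $x(z)$ is the point of slope $z$), not a region with moving lateral boundaries. On that strip the initial inequality is just $v(x,0)\ge zx+b(z)\ge\psi(x)$ (no smoothing of $\psi$ is needed, since the cap only has to dominate the supporting line), and a direct computation shows $v(x(z),t)\ge zx(z)+b(z)\ge\psi(x(z))\ge\psi_+(x(z),t)$ for $0\le t\le t_0$. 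At time $t_0$ the cap passes through the origin with slope $Z$, so $\psi_+(x,t_0)\le v(x,t_0)\le Zx$ and the gradient bound follows.

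Your concern about the left boundary $x=0$ is legitimate and the paper is tacit about it, but your proposed fix via odd reflection is circular as stated: the odd extension of $\psi_+$ across $x=0$ is a solution on each side, but continuity at $x=0$ is precisely the statement $\psi_+(0^+,t)=0$ you are trying to prove. The clean route (which you also mention) is to compare the cap not with $\psi_+$ but with the approximants $v_k$ of Theorem~\ref{thm:sharpbarrier}, which satisfy $v_k(0,t)=0$ by construction; since $v(0,t)=b(z)(1-t/t_0)\ge 0$ on $[0,t_0]$, the comparison goes through for each $k$ and one passes to the limit. The detour through smooth $\psi_k$ to ``hug the supporting line'' is unnecessary.
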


\begin{proof}
Suppose \eqref{eq:1Dcriterion} holds, and 
fix $t_0>0$.  Then in particular there exists $z$ such that $\int_z^\infty (s-z)\alpha(s)\,ds >\frac{2b(z)^2}{t_0}$, and hence there also exists $Z>z$ such that $\int_z^Z (s-z)\alpha(s)\,ds = \frac{2b(z)^2}{t_0}$.  We prove that $\psi_+'(0,t_0)\leq Z$ for any such $z$ and $Z$.

Let $v$ be the translating solution with graph $\{(x(p),y(p,t)):\ z\leq p\leq Z, 0\leq t\leq t_0\}$ defined parametrically by \eqref{eq:transparam}, with $x(Z)=0$, $y(Z,0)=b(z)$, and $V=\frac{b(z)}{t_0}$.  Therefore $y(Z,t)=y(Z,0)-tV\in [0,b(z)]$ for $0\leq t\leq t_0$.  The graph has slope $z\leq p\leq Z$, hence $v(x,0)\geq b(z)+zx$.  It follows that $v(x,0)\geq\psi(x)$ since
$\psi(x)-xz\leq b(z)$ for every $x$, so $\psi(x)\leq b(z)+xz\leq v(x,0)$ for each $x\in [0,x(z)]$.
Finally, we compute 
\begin{align*}
y(Z,t) -zx(Z) &= \left(b(z)+\frac{1}{V}\int_z^Z r\alpha(r)\,dr-Vt\right)-\frac{z}{V}\int_z^Z \alpha(r)\,dr\\
&=b(z)+\frac{t_0}{b(z)}\int_z^Z(r-z)\alpha(r)\,dr-\frac{b(z)t}{t_0}\\
&=2b(z)-\frac{t}{t_0}b(z)\\
&\geq b(z)
\end{align*}
and therefore we have $y(Z,t)\geq x(Z)z+b(z)\geq \psi(x(Z))\geq \psi_+(x(Z),t)$ for $0\leq t\leq t_0$.  By the comparison principle, $\psi_+(x,t)\leq v(x,t)$ for $0\leq x\leq x(Z)$ and $0\leq t\leq t_0$, and in particular $\psi_+(x,t_0)\leq v(x,t_0)\leq Zx$ for all $x$, so $\psi_+'(0,t_0)\leq Z$.  By concavity of $\psi_+(.,t)$ this implies $\psi_+'(x,t)\leq Z$ for every $x$.  The other inequality is proved similarly.
\end{proof}

\begin{remark}
The condition \eqref{eq:1Dcriterion} is not in general necessary for a gradient bound.  Under some circumstances this is the case, for example if there exists a convex function $\mu$ with $\lim_{z\to\infty}\mu=0$ such that $\alpha(z) = (\mu^q)_{zz}$ for some $q>2$.  In this case it can be shown that there exists a homothetic solution $v$ of the form 
$$
v(x,t) = \sqrt{-t}\,v_0\left(\frac{x}{\sqrt{-t}}\right)
$$
for $0<x<C\sqrt{-t}$ for some $C$,
with $v_0(0)=0$ and $v_0'(0)=\infty$.  Furthermore $v_0$ has Legendre transform comparable to
$\left(\int_z^\infty(s-z)\alpha(s)\,ds\right)^\frac12$ for large $z$.  It follows that any $\psi$ which does not satisfy condition \eqref{eq:1Dcriterion} lies above $v(.,t_0)$ for some $t_0<0$.  A comparison principle argument then shows that the minimal supersolution $\varphi(.,t)$ lies above $v(.,t_0+t)$ for small $t$, and so cannot have bounded gradient for these times.

It seems a plausible conjecture that whenever $B$ is bounded, there exists a homothetic solution of this kind, and that this determines the optimal modulus of continuity.  Examples show that in some cases the resulting modulus of continuity is not comparable to that given in Condition \eqref{eq:1Dcriterion}, so the optimal condition on initial data is a weaker one.
\end{remark}

\begin{corollary}\label{cor:bddbarrier}
If $\psi (z) =M$ for $0<z<L/2$, then $\psi_+$ has gradient bounded above for each $t>0$ if and only if $\lim_{\xi\to\infty}B(\xi)=\infty$,  and gradient bounded below for each $t>0$ if and only if $\lim_{\xi\to-\infty}B(\xi)=\infty$.   In this case 
$$
\inf\left\{\xi:\ B(\xi)\leq \frac{M^2}{t}\right\}\leq \psi_+'(z,t)\leq \sup\left\{\xi:\ B(\xi)\leq \frac{M^2}{t}\right\}.
$$
\end{corollary}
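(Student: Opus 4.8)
The plan is to read the whole statement off Theorem~\ref{thm:barriercriterion} and the description of translating solutions in Section~\ref{sec:trans}.

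\emph{The constant modulus.} For $\psi\equiv M$ one computes $b(z)=\sup_{0\le x\le L}(M-xz)$, which equals $M$ for $z\ge 0$ and $M-Lz$ for $z<0$; in particular $\inf_z b(z)=b(0)=M$, and $\tilde b(z)=b(z)+Lz/2=M+L|z|/2$ also attains its infimum $M$ at $z=0$. So the theorem's criteria are governed entirely by the behaviour of the integrand at $z=0$.

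\emph{The ``if'' direction and the estimate.} I would first record the elementary fact that $\lim_{\xi\to\infty}B(\xi)=\infty$ if and only if $\int_z^{\infty}(s-z)\alpha(s)\,ds=\infty$ for some (equivalently every) $z$: this comes from comparing $\int_z^{\infty}(s-z)\alpha(s)\,ds$ with $\int_z^{\infty}s\alpha(s)\,ds=\lim_\xi B(\xi)-B(z)$ (splitting off a compact piece when $z<0$, and noting $\lim A=\infty\Rightarrow\lim B=\infty$). Hence, when $\lim_\xi B=\infty$, the choice $z=0$ makes the quotient in \eqref{eq:1Dcriterion} equal to $M/\sqrt{\infty}=0$, so \eqref{eq:1Dcriterion} holds and Theorem~\ref{thm:barriercriterion} yields a gradient bound above. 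For the explicit form I substitute $z=0$ (so $b(0)=M$) in \eqref{eq:gradest}: the constraint $\int_0^Z s\alpha(s)\,ds\ge M^2/t$ is exactly $B(Z)\ge M^2/t$ with $Z\ge 0$, which forces $Z\ge \sup\{\xi:B(\xi)\le M^2/t\}$ (the unique nonnegative root of $B(\xi)=M^2/t$, using that $B$ is continuous and increasing on $[0,\infty)$ with $B(0)=0$, $B\to\infty$, and that $\lim_{\xi\to-\infty}B=\infty$ bounds the negative part of $\{\xi:B(\xi)\le M^2/t\}$); this gives the claimed upper bound. The lower bound is the mirror image: $\lim_{\xi\to-\infty}B=\infty$ makes the companion criterion hold at $z=0$ (where $\tilde b(0)=M$), and putting $z=0$ in \eqref{eq:rightgradest}, where the integral $\int_Z^0(-s)\alpha(s)\,ds$ is again $B(Z)$, yields $\psi_+'(x,t)\ge\inf\{\xi:B(\xi)\le M^2/t\}$.

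\emph{The ``only if'' direction.} Suppose $\lim_{\xi\to\infty}B(\xi)=B_\infty<\infty$; then $\lim_{\xi\to\infty}A(\xi)=A_\infty<\infty$ as well. By Section~\ref{sec:trans} there is then a concave, downward--moving translating solution whose positively--sloped (rising) arc has finite width $A_\infty/V$ and finite height $B_\infty/V$ and terminates in a vertical tangent. Scaling $V$ so that this arc has oscillation $\le M$ and width $<L/2$, I would place it with its vertical--tangent end at $z=0$ at height $0$, complete it to a concave initial modulus $g$ on $[0,L/2]$ with $g(0)=g(L/2)=0$, $g\le M$, agreeing with the translating arc near $z=0$, and run \eqref{eq:1Dflow} from the odd $L$--periodic extension of $g$. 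The resulting solution stays concave and vanishes at $z\in\{0,L/2\}$, so by Corollary~\ref{cor:optimalbound} it lies below $\psi_+$; comparing it from below with the translating solution, and using the time--regularity estimate of Lemma~\ref{lem:oscvk} to keep the translating barrier below at the interior endpoint of the arc, one forces $\psi_+(\cdot,t)$ to carry near $z=0$ a profile at least as steep as the translating arc, hence $\psi_+'(\cdot,t)$ unbounded above for small $t$; symmetrically for the lower bound via $\lim_{\xi\to-\infty}B<\infty$.

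\emph{The main obstacle.} The delicate point is exactly this last step. A translating barrier drifts down by $Vt$, and at $t=0$ it cannot be made to touch the flat datum $M$ along its vertical--tangent arc, so a naive comparison only produces a lower bound of the shape $\psi_+(z,t)\ge(\text{arc})-Vt$, which by itself does \emph{not} give $\psi_+'(0^+,t)=+\infty$ (it is consistent with a merely large, $O(t^{-1/2})$, bound as for the heat equation). One must therefore either engineer a genuine tangency between $\psi_+$ and a translating (or a composite min--of--solutions) barrier at the vertical--tangent point for some $t>0$, or balance $V$ against $t$ in a scaling limit so that the $-Vt$ shift becomes negligible relative to the arc, all while controlling the constant in Lemma~\ref{lem:oscvk}, which degenerates as the comparison interval contracts toward $z=0$. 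Getting these dependencies to close is the heart of the matter; everything else is a direct specialisation of Theorem~\ref{thm:barriercriterion}.
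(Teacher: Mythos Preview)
Your ``if'' direction and the explicit estimate are correct and match the paper exactly: set $z=0$ in Theorem~\ref{thm:barriercriterion}, using $b(0)=\tilde b(0)=M$.

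For the ``only if'' direction you have assembled the right ingredients --- a translating subsolution with a vertical-tangent end, together with the time-continuity estimate \eqref{eq:tcontest} to control $\psi_+$ at the interior endpoint --- but you have misidentified the target, and this is precisely why the ``main obstacle'' you describe looks intractable. You are trying to force $\psi_+'(0^+,t)=+\infty$. The paper instead shows that $\psi_+(0^+,t)\ge M/4>0$, i.e.\ that the modulus fails to vanish at the origin. Since a Lipschitz bound requires $\psi_+(z,t)/z$ to stay bounded as $z\to 0$, this immediately rules out a gradient bound (in the sense relevant here), and the $-Vt$ drift causes no difficulty: one only needs the translated barrier to stay above $M/4$, which holds for $t\le M/(4V)$.

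The second point you are missing is the $\varepsilon$-shift. Rather than anchoring the vertical-tangent end at $z=0$, the paper places it at $z=\varepsilon>0$ and compares $\psi_+$ with the translator $v$ on $[\varepsilon,\varepsilon+X]\times[0,\tilde\tau]$. The infinite slope of $v$ at $z=\varepsilon$ prevents the minimum of $\psi_+-v$ from occurring at the left endpoint (since $\psi_+$ is regular in the interior), the time-continuity estimate handles the right endpoint, and at $t=0$ one has $v\le 3M/4\le M=\psi_+(\cdot,0)$. The comparison gives $\psi_+(\varepsilon,t)\ge v(\varepsilon,t)=M/2-Vt\ge M/4$, and letting $\varepsilon\downarrow 0$ yields $\psi_+(0^+,t)\ge M/4$. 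This bypasses entirely the delicate tangency/scaling balance you were contemplating; there is no need to build an auxiliary initial datum $g$ or to run the flow from its odd extension.
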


\begin{proof}  In this case we have $b(0)=\tilde b(0)=M$, and the gradient bound follows by setting $z=0$ in \eqref{eq:gradest} and \eqref{eq:rightgradest}.

On the other hand, suppose that $\lim_{\xi\to\infty}B(\xi)<\infty$.  Then $\lim_{\xi\to\infty}A(\xi)<\infty$ also. 
Choose $V>\max\left\{\frac{4}{L}\lim_{\xi\to\infty}A(\xi),\frac{4}{M}\lim_{\xi\to\infty}B(\xi)\right\}$.   Let $X=\frac{1}{V}\lim_{\xi\to\infty}A(\xi)<\frac{L}{4}$.  The estimate \eqref{eq:tcontest} applies to $\psi_+$, since $\psi_+$ is locally the uniform limit of the functions $v_k$.  Therefore there exists $\tau>0$ such that
$\psi_+(x,t)\geq \frac{3M}{4}$ for $0\leq t\leq \tau$ and $X\leq x\leq \frac{L}{2}-X$.

Let $\varepsilon>0$ be sufficiently small to ensure that $X+\varepsilon\leq\frac{L}{2}$.  Let $v$ be the translating solution with speed $V$ defined parametrically by $x(p)=\varepsilon+\frac{1}{V}\int_p^\infty \alpha(s)\,ds$ and $v(p,t) = \frac{M}{2}+\frac{1}{V}\int_p^\infty s\alpha(s)\,ds-Vt$, with $0\leq t\leq \tilde\tau=
\min\left\{\tau,\frac{M}{4V}\right\}$.  For each $p$ we have $v(p,t)\geq \frac{M}{2}-Vt\geq \frac{M}{2}-V\frac{M}{4V} = \frac{M}{4}$, and we have $v(p,0)\leq \frac{M}{2}+\frac{1}{V}\lim_{\xi\to\infty}B(\xi)\leq \frac{3M}{4}\leq \psi(x(p))$, and for each $t$ in this range we have $v(0,t)\leq \frac{3M}{4}\leq \psi_+(x(0),t)$.  Finally, since $\lim_{p\to\infty}v'(x(p),t)=\infty$, the minimum of $\psi_+(.,t)-v(.,t)$ cannot occur at the left endpoint.  Therefore by the comparison principle we have $\psi_+(x,t)\geq v(x,t)$ for $\varepsilon\leq x\leq X+\varepsilon$ and $0\leq t \leq\tilde\tau$.  This holds for every $\varepsilon>0$, and therefore 
$\psi_+(0,t)\geq \frac{M}{4}$ for $0\leq t\leq \tilde\tau$ and $\psi_+$ does not have bounded gradient on this time interval.  The argument for the failure of the lower gradient bound is similar.
\end{proof}

Corollary \ref{cor:bddbarrier} and Theorem \ref{thm:sharpbarrier} give a necessary and sufficient condition for the Lipschitz constants of regular solutions to be controlled by initial oscillation and elapsed time:
\begin{corollary}\label{cor:lipboundcriterion}
$L$-periodic solutions of Equation \eqref{eq:1Dflow} satisfy estimates
of the form
$$
|u'(z,t)|\leq C(t,|u|_{\infty})
$$
for each $t>0$
if and only if $B$ approaches infinity at $\pm\infty$.
\end{corollary}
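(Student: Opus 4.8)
The plan is to read off both implications from the machinery of Sections \ref{sec:estimate}--\ref{sec:suff}, specialised to the constant initial modulus of continuity. Throughout, given an $L$-periodic solution $u$ put $M=|u|_\infty$; then $\osc u(\cdot,0)\le 2M$, so $\psi\equiv M$ is a modulus of continuity for $u(\cdot,0)$ in the sense of Definition \ref{def:modcont}, and since the constant function $M$ belongs to $\mathcal S_\psi$ we always have $0\le\psi_+\le M$. Everything will then be extracted from Corollary \ref{cor:optimalbound} (which turns $\psi_+$ into a modulus of continuity for $u$), Corollary \ref{cor:bddbarrier} (which decides exactly when $\psi_+$ has bounded gradient), and Theorem \ref{thm:sharpbarrier} (which supplies explicit solutions realising $\psi_+$).

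For the \emph{if} direction, suppose $B(\xi)\to\infty$ as $\xi\to\pm\infty$. Fix an $L$-periodic solution $u$ and $M=|u|_\infty$. By Corollary \ref{cor:optimalbound}, $\psi_+(\cdot,t)$ is a modulus of continuity for $u(\cdot,t)$, and by Corollary \ref{cor:bddbarrier} it has gradient bounded both ways, namely
\[
\inf\left\{\xi:\ B(\xi)\le \frac{M^2}{t}\right\}\le \psi_+'(z,t)\le \sup\left\{\xi:\ B(\xi)\le \frac{M^2}{t}\right\}.
\]
The translating-solution barriers used in the proof of Theorem \ref{thm:barriercriterion} (take $z=0$ in \eqref{eq:gradest}, and the analogous construction at the endpoint $L/2$ using $B(-\infty)=\infty$) in fact bound $\psi_+(x,t)$ by a finite multiple of the distance to the nearest endpoint, so $\psi_+(0,t)=\psi_+(L/2,t)=0$. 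Hence Remark (2) after Theorem \ref{thm:1Dgeneral} applies and gives $\psi_+'(L/2,t)\le u'(z,t)\le \psi_+'(0,t)$ for every $z$; combining with the displayed inequality yields $|u'(z,t)|\le C(t,M)$ with $C(t,M)=\max\{\,|\inf\{\xi:B(\xi)\le M^2/t\}|,\ \sup\{\xi:B(\xi)\le M^2/t\}\,\}$.

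For the \emph{only if} direction I argue by contraposition; suppose, say, $\lim_{\xi\to\infty}B(\xi)<\infty$ (the case $\xi\to-\infty$ is identical with the lower gradient bound and the endpoint $L/2$ in place of $0$). Fix $M=1$ and $\psi\equiv M$. The second half of the proof of Corollary \ref{cor:bddbarrier} then produces $\tilde\tau>0$ with $\psi_+(0,t)\ge M/4$ for $0\le t\le\tilde\tau$; fix such a $t_0\in(0,\tilde\tau]$. Now bring in the solutions $v_k$ of Theorem \ref{thm:sharpbarrier}: extended by odd reflection they are $L$-periodic regular solutions of \eqref{eq:1Dflow}, each $v_k(\cdot,t)$ is concave with $v_k(0,t)=0$, one has $|v_k|_\infty\le\sup\psi_k\le M$, and $v_k\to\psi_+$ locally uniformly on $(0,L/2)\times[0,\infty)$. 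Given any $N>0$, choose $z_0\in(0,L/2)$ small enough that $z_0<M/(8N)$ and (using continuity of $\psi_+$ at $(0,t_0)$ together with $\psi_+(0,t_0)\ge M/4$) $\psi_+(z_0,t_0)>M/8$; then $v_k(z_0,t_0)>M/8$ for all large $k$, and concavity of $v_k(\cdot,t_0)$ with $v_k(0,t_0)=0$ forces $v_k'(0,t_0)\ge v_k(z_0,t_0)/z_0>M/(8z_0)>N$. Thus the $v_k$ are $L$-periodic solutions with $|v_k|_\infty\le M$ but $\sup_k\|v_k'(\cdot,t_0)\|_\infty=\infty$, which is incompatible with any estimate of the form $|u'(z,t_0)|\le C(t_0,|u|_\infty)$.

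The step I expect to be the main obstacle is precisely this last reduction: the failure of a Lipschitz bound is a priori only a statement about the minimal supersolution $\psi_+$, and it manifests not as blow-up of an interior derivative but as $\psi_+(0,t)>0$ for small $t$; it has to be converted into an honest sequence of periodic solutions with uniformly bounded sup-norm whose Lipschitz constants at a fixed time tend to infinity. The mechanism is that the approximating functions $v_k$ of Theorem \ref{thm:sharpbarrier} are themselves solutions, are concave with vanishing boundary values, and converge to $\psi_+$; the delicate point is the order of quantifiers, namely that the comparison scale $z_0$ must be chosen as a function of the target Lipschitz constant $N$ \emph{before} $k$ is sent to infinity. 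Once this is arranged, the remainder is a direct citation of Corollaries \ref{cor:optimalbound} and \ref{cor:bddbarrier} and Remark (2) after Theorem \ref{thm:1Dgeneral}.
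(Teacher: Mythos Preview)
Your argument is correct and follows precisely the route the paper intends: the paper gives no separate proof of this corollary, merely recording that it follows from Corollary~\ref{cor:bddbarrier} together with Theorem~\ref{thm:sharpbarrier}, and you have filled in exactly those details. One small remark on the ``only if'' direction: you invoke continuity of $\psi_+$ at $(0,t_0)$ to get $\psi_+(z_0,t_0)>M/8$ for small $z_0$, but this continuity at the boundary is not stated anywhere; it is cleaner to observe that the translating-barrier comparison in the proof of Corollary~\ref{cor:bddbarrier} actually gives $\psi_+(x,t)\ge M/4$ for all $x\in(0,X)$ and $t\in[0,\tilde\tau]$ (just fix $x$ and let $\varepsilon\to 0$ there), so no appeal to boundary continuity is needed.
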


\section{Examples:  Gradient-dependent coefficients}\label{sec:examples}

In this section we explore the application of the previous results to some examples of equations with
gradient-dependent coefficients.  

\begin{example}\label{exa:heateq2}
We first consider the case of the heat equation, $\alpha(p)=1$ for all $p$.  As observed in Example \ref{exa:heateq1}, the
translating solutions are unbounded, and so Lipschitz bounds hold for positive times, by Corollary \ref{cor:lipboundcriterion}.  The estimate provided by Theorem \ref{thm:barriercriterion} for bounded solutions ($b(z)=M$ for all $z$) is then
$$
|\psi_+'(z,t)|\leq \frac{\sqrt{2}M}{\sqrt{t}}.
$$
(In this case the sharp estimate is $\left|\psi'(z,t)\right|\leq M\left(4\pi t\right)^{-1/2}$).
We can also deduce a stronger estimate if the initial data is H\"older continuous:  If $\psi(z) = Cz^{\beta}$, then we have
$b(z) = C^{1/(1-\beta)}\beta^{\beta/(1-\beta)}(1-\beta)z^{-\beta/(1-\beta)}$, so the gradient estimate becomes
$$
|\psi_+'(z,t)|\leq CC_2t^{-\frac{1-\beta}{2}}
$$
where $C_2=C_2(\beta)$.
\end{example}

\begin{example}\label{exa:asymhom2}
Consider the asymptotically homogeneous classes discussed in Example \ref{exa:asymhom}, where $\alpha(p)\sim|p|^{-\gamma}$.     For bounded initial data ($\psi(z)=M$) condition \eqref{eq:1Dcriterion} is satisfied only when $\gamma\le 2$, and then the gradient estimate given by Theorem \ref{thm:barriercriterion} for small $t$ is  
\begin{equation*}
|\psi_+'(z,t)|\le
\begin{cases} 
 \left(
\dfrac{CM ^2}{t}
\right)^{\frac{1}{2-\gamma} }, & \gamma<2 \\
\exp\left(\dfrac{CM^2}t\right) & \gamma=2.  
\end{cases}
\end{equation*}

If the initial data is H\"older continuous, with $\psi(z)=Kz^\beta$ and $b(z)$
as in the above example, then condition \eqref{eq:1Dcriterion} is satisfied when $\gamma<\frac{2}{1-\beta}$.     The gradient estimate is  
\begin{equation*}
|\psi_+'(z,t)|\le \tilde{C}K^{\frac{2}{2-(1-\beta)\gamma}}t^{-\frac{1-\beta}{2-(1-\beta)\gamma}}.
\end{equation*}
This cannot be improved to $\gamma=\frac{2}{1-\beta}$ if $\gamma>2$, since the function $y=x^{\frac{\gamma-2}{\gamma}}$ gives $\alpha(y')y''\propto -y$, so there is a self-similar subsolution with this H\"older exponent, and consequently $\psi_+(.,t)$ does not have bounded gradient for small $t>0$ in such cases.
\end{example}

\section{Coefficients depending on position, time and height}\label{sec:coeff}

The methods introduced above for equations with coefficients depending only on the gradient also apply much more generally, provided the coefficients can be estimated from below by a function of the gradient.

Consider regular solutions of equations of the form
\begin{equation}\label{eq:1Dgenflow}
\frac{\partial u}{\partial t}(x,t) = \tilde\alpha(x,t,u(x,t),u'(x,t))u''(x,t)
\end{equation}
where $\tilde a(x,t,z,p)\geq \alpha(p)$.  We require $\tilde\alpha$ to be $L$-periodic in $x$.  The main result is as follows:

\begin{theorem}
Let $u$ be a regular $L$-periodic solution of \eqref{eq:1Dgenflow} on $\RR\times[0,T]$ with initial modulus of continuity $\psi$, where $\psi$ is positive and concave on $(0,\frac{L}{2})$.  Then $u(.,t)$ has modulus of continuity $\psi_+(.,t)$ for each $t>0$, where $\psi_+$ is defined as in Definition \ref{def:minsuper} using supersolutions of \eqref{eq:1Dflow}.
\end{theorem}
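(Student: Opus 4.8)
The plan is to run the maximum‑principle argument of Theorem~\ref{thm:1Dgeneral}, modified in two ways: the comparison function is taken to be one of the \emph{concave} supersolutions $\varphi_k\in\mathcal{S}_\psi$ constructed in the proof of Theorem~\ref{thm:sharpbarrier} (these are genuine regular solutions of \eqref{eq:1Dflow}, concave in $z$ for every $t$, and converge locally uniformly to $\psi_+$ since $\psi$ is concave), and the auxiliary two‑variable parabolic operator is assembled with \emph{distinct} diagonal coefficients, the hypothesis $\tilde\alpha\ge\alpha$ supplying exactly the slack needed to keep the relevant coefficient matrix positive semi‑definite. First I would reduce the statement to proving, for every $k$, every $t>0$ and every $0<y-x<L$, that $u(y,t)-u(x,t)\le 2\varphi_k\!\left(\frac{y-x}{2},t\right)$; letting $k\to\infty$ then gives the right‑hand inequality of \eqref{eq:modofcont} with $\psi_+$ in place of $\varphi$, and replacing $x$ by $y$ and $y$ by $x+L$ and using $L$‑periodicity recovers the left‑hand inequality, so that $\psi_+(\cdot,t)$ is a modulus of continuity for $u(\cdot,t)$.

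Fixing $k$, writing $\varphi=\varphi_k$ and taking $\varepsilon>0$, I would set $Z(x,y,t)=u(y,t)-u(x,t)-2\varphi\!\left(\frac{y-x}{2},t\right)-\varepsilon(1+t)$ on $\{x<y<x+L,\ t>0\}$. Using periodicity and uniform continuity of $u$ on compact time intervals, together with $\varphi\ge0$ and $\varphi(z,0)\ge\psi(z)$ (which holds since $\varphi_k\in\mathcal{S}_\psi$), one checks exactly as in Theorem~\ref{thm:1Dgeneral} that $Z<0$ whenever $y-x$ is small, $L-(y-x)$ is small, or $t$ is small. If $Z$ is not everywhere negative there is a first time $t_0>0$ and an interior spatial maximum point $(x_0,y_0)$ with $x_0<y_0<x_0+L$, where \eqref{eq:xderivcond} and \eqref{eq:yderivcond} force $u'(x_0,t_0)=u'(y_0,t_0)=\varphi'\!\left(\frac{y_0-x_0}{2},t_0\right)=:p$, the Hessian condition \eqref{eq:Hessiancond} holds, and differentiating $Z$ in $t$ and substituting \eqref{eq:1Dgenflow} gives, with $a_1=\tilde\alpha(y_0,t_0,u(y_0,t_0),p)$, $a_2=\tilde\alpha(x_0,t_0,u(x_0,t_0),p)$ and any $c$,
\[
0\le\frac{\partial Z}{\partial t}=\mathrm{Tr}\!\left(\begin{bmatrix}a_2&c\\ c&a_1\end{bmatrix}\begin{bmatrix}\frac{\partial^2Z}{\partial x^2}&\frac{\partial^2Z}{\partial x\partial y}\\ \frac{\partial^2Z}{\partial y\partial x}&\frac{\partial^2Z}{\partial y^2}\end{bmatrix}\right)+\left(\frac{a_1+a_2}{2}-c\right)\varphi''-2\frac{\partial\varphi}{\partial t}-\varepsilon,
\]
the sole difference from Theorem~\ref{thm:1Dgeneral} being that the diagonal entries $a_1,a_2$ (which satisfy $a_1,a_2\ge\alpha(p)$ by $\tilde\alpha\ge\alpha$) may now differ.

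I would then choose $c=-\sqrt{a_1a_2}$, the most negative value keeping that matrix positive semi‑definite (its trace is $a_1+a_2>0$ and its determinant is $0$), so that by \eqref{eq:Hessiancond} the trace term is $\le0$; invoking $\partial_t\varphi\ge\alpha(\varphi')\varphi''$ this leaves
\[
0\le\left(\frac{a_1+a_2}{2}+\sqrt{a_1a_2}-2\alpha(\varphi')\right)\varphi''-\varepsilon.
\]
Since $a_1,a_2\ge\alpha(p)=\alpha(\varphi')$, the coefficient of $\varphi''$ is $\ge0$, and since $\varphi$ is concave $\varphi''\le0$; hence the right‑hand side is $\le-\varepsilon$, a contradiction. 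Thus $Z<0$ throughout, so $u(y,t)-u(x,t)<2\varphi_k\!\left(\frac{y-x}{2},t\right)+\varepsilon(1+t)$, and sending $\varepsilon\to0$ and then $k\to\infty$ completes the proof. The one genuinely new ingredient beyond Theorem~\ref{thm:1Dgeneral} is the geometric‑mean choice $c=-\sqrt{a_1a_2}$, and I expect the main obstacle to be recognising that it \emph{must} be paired with $\varphi''\le0$: once $a_1\neq a_2$ there is no admissible $c$ for which $\bigl(\frac{a_1+a_2}{2}-c\bigr)\varphi''$ is absorbed into $\alpha(\varphi')\varphi''$ for \emph{both} signs of $\varphi''$, so the comparison function has to be chosen concave — which is precisely why the hypotheses include concavity of $\psi$ and why the proof leans on the concave supersolutions $\varphi_k$ of Theorem~\ref{thm:sharpbarrier} rather than on an arbitrary element of $\mathcal{S}_\psi$. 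A secondary point requiring the usual care is the behaviour of $Z$ near the corners $(0,0)$ and $(L/2,0)$, handled by uniform continuity of $u$ as in the earlier proofs.
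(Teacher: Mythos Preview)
Your argument is correct and follows the same overall maximum‐principle template as the paper (define $Z$, locate a first‐touching point, derive a contradiction from the parabolic inequality), and you correctly identify that the new ingredient over Theorem~\ref{thm:1Dgeneral} is the concavity of the comparison function $\varphi$, which is why you pass through the approximants $\varphi_k$ from Theorem~\ref{thm:sharpbarrier} before letting $k\to\infty$.

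The one genuine difference is in how the mismatched diagonal coefficients $a_1=\tilde\alpha(y,\dots,p)$ and $a_2=\tilde\alpha(x,\dots,p)$ are handled.  The paper does not keep them in the trace formula at all: it first observes that the diagonal Hessian conditions $Z_{yy}\le0$, $Z_{xx}\le0$ together with $\varphi''\le0$ force $u''(y_0,t_0)=Z_{yy}+\tfrac12\varphi''\le0$ and $u''(x_0,t_0)=-Z_{xx}-\tfrac12\varphi''\ge0$, so that $\tilde\alpha(\cdot)\,u''$ can be replaced termwise by $\alpha(\varphi')\,u''$ using only $\tilde\alpha\ge\alpha$; after that the computation is literally identical to Theorem~\ref{thm:1Dgeneral} with the single choice $c=-\alpha(\varphi')$.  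Your route instead keeps $a_1,a_2$ and makes the geometric‐mean choice $c=-\sqrt{a_1a_2}$, which renders the coefficient matrix positive semidefinite with zero determinant, and then uses $a_1,a_2\ge\alpha(\varphi')$ and $\varphi''\le0$ on the residual scalar term.  Both arguments consume exactly the same hypotheses (the lower bound $\tilde\alpha\ge\alpha$ and the concavity of $\varphi$), and both collapse to the earlier proof when $a_1=a_2$; the paper's version is a shade more modular in that it reduces verbatim to Theorem~\ref{thm:1Dgeneral}, while your version is a clean self‐contained computation that avoids splitting $u''$ out of the trace.
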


In particular, this implies that the conditions in Theorems \ref{thm:barriercriterion} and \ref{cor:bddbarrier}  are sufficient for Lipschitz bounds for such equations.

\begin{proof}
We follow the proof of Theorem \ref{thm:1Dgeneral}:  Defining
$Z(x,y,t)=u(y,t)-u(x,t)-2\varphi\left(\frac{y-x}{2},t\right)-\varepsilon(1+t)$ as before, we arrive at the folowing at the first point and time $(x_0,y_0,t_0)$ where $Z$ reaches zero:
\begin{align*}
0\leq \frac{\partial Z}{\partial t}(x,y,t)
&=\tilde\alpha\left(y,t,u(y,t),u'(y,t)\right)u''(y,t)-\tilde\alpha\left(x,t,u(x,t),u'(x,t)\right)u''(x,t)\\
&\quad\null-2\frac{\partial\varphi}{\partial t}\left(\frac{y-x}{2},t\right)-\varepsilon\\
&<\tilde\alpha(y,t,u(y,t),\varphi')\left(\frac{\partial^2Z}{\partial y^2}+\frac{\varphi''}{2}\right)
+\tilde\alpha(x,t,u(x,t),\varphi')\left(\frac{\partial^2Z}{\partial x^2}+\frac{\varphi''}{2}\right)\\
&\quad\null +2c\left(\frac{\partial^2Z}{\partial y\partial x}-\frac{\varphi''}{2}\right)
-2\frac{\partial\varphi}{\partial t}.
\end{align*}
At this point $\frac{\partial^2Z}{\partial y^2}\leq 0$, and since $\varphi$ is concave we have
$\frac{\partial^2Z}{\partial y^2}+\frac{\varphi''}{2}\leq 0$.  Therefore 
$$
\tilde\alpha\left(y,t,u(y,t),\varphi'\right)\left(\frac{\partial^2Z}{\partial y^2}+\frac{\varphi''}{2}\right)
\leq \alpha(\varphi')\left(\frac{\partial^2Z}{\partial y^2}+\frac{\varphi''}{2}\right),
$$
and similarly for the terms involving $x$.  Thus exactly as in Theorem \ref{thm:1Dgeneral} we find
$$
0< \text{\rm Tr}
\left(
\bmatrix \alpha(\varphi')&c\cr c&\alpha(\varphi')\cr\endbmatrix
\bmatrix \frac{\partial^2Z}{\partial x^2}&\frac{\partial^2Z}{\partial x\partial y}\cr
\frac{\partial^2Z}{\partial y\partial x}&\frac{\partial^2Z}{\partial y^2}\cr\endbmatrix\right)+\left(\alpha(\varphi')-c\right)\varphi''-2\frac{\partial\varphi}{\partial
t},
$$
yielding the required contradiction with the choice $c=-\alpha(\varphi')$.
\end{proof}

\appendix
\section{Regularity results for uniformly parabolic flows}\label{sec:unifexist}

This section concerns regularity results for quasilinear parabolic equations in which the coefficients depend only on the gradient of the solution.  
In particular we will deduce estimates on continuity for the second derivatives of solutions to uniformly parabolic equations, assuming only that the coefficients are continuous.   The  results are not crucial for the main results of the paper, but they make possible results which assume only continuity of the coefficient $\alpha$.

The corresponding estimates fail for more general parabolic equations, even in one dimension:  A simple example is provided by the function
$$
u(x,t) = (x^2+t)\log\left(1+\frac{1}{x^2-t}\right)+C_3t+C_4 x^2
$$
on $\RR\times(-\infty,0]$ (extending to value zero at $(0,0)$).  For $C_3$ and $C_4$ large this is
a solution of an equation of the form
$$
\frac{\partial u}{\partial t}(x,t) = \alpha(x,t)\frac{\partial^2 u}{\partial x^2}(x,t)
$$
with the coefficients $\alpha(x,t)$ continuous on $\RR\times(-\infty,0]$ and satisfying $0<\lambda\leq\alpha(x,t)\leq \Lambda$ for some constants $\lambda$ and $\Lambda$ (a similar example appears in \cite{il1967parabolic}).  However $u$ does not have continuous or even bounded second spatial derivatives near $(0,0)$.  Schauder estimates provide continuity of the second derivatives if the coefficient $\alpha(x,t)$ is Dini-continuous \cite{MatEid}.

Now we proceed to the estimates:
Let $\alpha$ be continuous on $\RR$, with 
\begin{equation}\label{eq:ellipt}
0<\lambda\leq \alpha(p)\leq\Lambda
\end{equation}
for all $p$, for some constants $\lambda$ and $\Lambda$, and
\begin{equation}\label{eq:alphacts}
|\alpha(p)-\alpha(q)|\leq \omega(|p-q|)
\end{equation} 
for all $p$ and $q$, where $\omega$ is continuous on $[0,\infty)$ with $\omega(0)=0$.
Denote $Q_R(0,0)=(-R,R)\times(-R^2,0]$.  Consider a solution $u$ on $Q_R(0,0)$ of the equation
\begin{equation}\label{eq:graddepflow}
\frac{\partial u}{\partial t} = \alpha(u')u''
\end{equation}
with $\tx{osc}(u)\leq M$.  The argument of Kruzhkov outlined in the introduction gives 
$$
|u'(x,t)|\leq CM\left(\frac{\Lambda}{\lambda d^{2}}+\frac{1}{\lambda (t+R^2)}\right)^{1/2},
$$
where $d=R-|x|$ is the distance to the boundary.  In particular on $Q_{R/2}(0,0)$ we have
\begin{equation}\label{eq:gradesthalfR}
|u'(x,t)|\leq C\left(\frac{1+\Lambda}{\lambda}\right)^{1/2}\frac{M}{R}.
\end{equation}
Equation \eqref{eq:graddepflow} implies $v=u'$ is a weak solution on $Q_{R/2}$ of the
divergence-form equation
\begin{equation}\label{eq:evolderiv}
\partial_tv = \partial_x\left(a(v)v'\right).
\end{equation}
It follows from the De-Giorgi-Nash estimate \cite[Theorem 6.28]{li:parabolic} that $v\in C^{0,\beta}_{\tx{loc}}((0,1)\times(0,T))$ for some $\beta$ depending on $\lambda$ and $\Lambda$, with estimates of the form
$$
\tx{osc}_{Q_r(x,t)}v \leq C\left(\frac{r}{\rho}\right)^\beta\tx{osc}_{Q_{\rho}(x,t)}v
$$
for $r<\rho$ with $Q_\rho(x,t)\subseteq Q_R(0,0)$.  Equation \eqref{eq:gradesthalfR} bounds $\tx{osc}_{Q_{R/2}(0,0)}v$, yielding
\begin{equation}\label{eq:Holderv}
|v(x,t)-v(y,s)|\leq \frac{CM}{R^{1+\beta}}\left(\frac{1+\Lambda}{\lambda}\right)^{1/2}
\left(|y-x|^2+|t-s|\right)^{\beta/2}
\end{equation}
whenever $(x,t)$ and $(y,s)$ are in $Q_{R/4}(0,0)$.
Define $A\in C^1_{\tx{loc}}({\mathbb R})$ by 
\begin{equation}\label{eq:defA}
A(z) = \int_0^z \alpha(s)\,ds.
\end{equation}
The ellipticity assumption implies $\lambda |z|\leq |A(z)|\leq \Lambda|z|$ for all $z$.
Equation \eqref{eq:graddepflow} implies
\begin{equation}\label{eq:evolA}
\partial_t(A(u')) = \alpha(u')\partial^2_x\left(A(u')\right).
\end{equation}
The Kruzhkov argument bounds the gradient $w=\partial_xA(u')=\alpha(u')u''=u_t$:
\begin{equation}\label{eq:boundw1}
|w(x,t)|^2\leq C\sup_{Q_r(x,t)}|A(u')|^2\frac{1+\Lambda}{\lambda r^2}.
\end{equation}
In particular, if $(x,t)\in Q_{R/4}(0,0)$ then $Q_{R/4}(x,t)\subseteq Q_{R/2}(0,0)$ and Equation \eqref{eq:gradesthalfR} yields
\begin{equation}\label{eq:boundw2}
|w(x,t)|^2\leq \frac{C(1+\Lambda)^2\Lambda M^2}{\lambda^2 R^4}.
\end{equation}

Differentiating Equation \eqref{eq:evolA} again gives a divergence-form equation for $w$:
\begin{equation}\label{eq:utt}
\partial_tw = \partial_x\left(\alpha(u')\partial_xw\right).
\end{equation}
Again, the De-Giorgi-Nash estimate implies that
$w\in C^{0,\beta}_{\tx{loc}}((0,1)\times(0,T))$, with 
$$
\tx{osc}_{Q_r(x,t)} w \leq C\left(\frac{r}{\rho}\right)^\beta\tx{osc}_{Q_\rho(x,t)}w
$$
for $r\leq\rho$.  In particular for $(x,t)$ and $(y,s)$ in $Q_{R/8}(0,0)$ we deduce
\begin{equation}\label{eq:Holderw}
\left|w(x,t)-w(y,s)\right|
\leq C\frac{(1+\Lambda)\sqrt{\Lambda} M}{\lambda R^{2+\beta}}
\left(|y-x|^2 + |t-s|\right)^{\beta/2}.
\end{equation}
This gives an estimate on the continuity of $u''$:
\begin{align}
\left| u''(x,t)-u''(y,s) \right|
&\leq \frac{1}{\alpha(u'(x,t))}\left|w(x,t)-w(y,s)\right|
+\left|\frac{\alpha(u'(y,s))}{\alpha(u'(x,t))}-1\right|\left|u''(y,s)\right|\notag\\
&\leq \frac{C(1+\Lambda)\sqrt{\Lambda}M}{\lambda R^2}
\left(\left(\frac{d}{R}\right)^\beta+\frac{1}{\lambda^2}\psi\left(\frac{C\sqrt{1+\Lambda}M}{\sqrt{\lambda}R}\left(\frac{d}{R}\right)^\beta\right)\right),\label{eq:ctsd2u}
\end{align}
where $d=\sqrt{|y-x|^2+|t-s|}$.

\def\cprime{$'$}
\providecommand{\bysame}{\leavevmode\hbox to3em{\hrulefill}\thinspace}
\providecommand{\MR}{\relax\ifhmode\unskip\space\fi MR }
\providecommand{\MRhref}[2]{%
  \href{http://www.ams.org/mathscinet-getitem?mr=#1}{#2}
}
\providecommand{\href}[2]{#2}

\end{document}